\documentclass[11pt]{article}
\usepackage[margin=2.5cm]{geometry}

 \usepackage[pdftex]{graphicx}
\usepackage{amsmath}
\usepackage{algorithm}
\usepackage{algorithmicx}
\usepackage{multicol}
\hyphenation{op-tical net-works semi-conduc-tor Pa-quet-te}

\usepackage{amsfonts}
\usepackage{amssymb}
\usepackage{amsthm}
\usepackage{color}
\usepackage{enumitem}
\usepackage{subfigure}
\usepackage[hidelinks,bookmarks=false]{hyperref}
\usepackage{ulem}
\usepackage{cite}
\usepackage{authblk}
\setlength\parindent{0pt}


\newcommand{\set}[1]{\left\{#1\right\}}
\newcommand{\iprods}[1]{\langle{#1}\rangle}
\newtheorem{definition}{Definition}[section]
\newtheorem{lemma}{Lemma}[section]
\newtheorem{proposition}{Proposition}[section]
\newcommand{\norms}[1]{\Vert#1\Vert}
\newtheorem{theorem}{Theorem}[section]

\newcommand{\xb}{\boldsymbol{x}}
\newcommand{\yb}{\boldsymbol{y}}
\newcommand{\wb}{\boldsymbol{w}}
\newcommand{\db}{\boldsymbol{d}}
\newcommand{\Vb}{\boldsymbol{V}}
\newcommand{\rb}{\boldsymbol{r}}
\newcommand{\ub}{\boldsymbol{u}}

\newcommand{\Gb}{\boldsymbol{G}}

\newcommand{\beforesec}{\vspace{-1ex}}
\newcommand{\aftersec}{\vspace{-0.5ex}}
\newcommand{\beforesubsec}{\vspace{-2ex}}
\newcommand{\aftersubsec}{\vspace{-0.5ex}}

\linespread{1.25}

\title{{A Globally Convergent Penalty-Based Gauss-Newton 
 Algorithm with Applications}}
\author[1]{Ilyes~Mezghani}
\author[2]{Quoc~Tran-Dinh}
\author[3]{Ion~Necoara}
\author[4]{Anthony~Papavasiliou}
\affil[1]{{\small Corresponding Author. Center for Operations Research and Econometrics, Universit\'e Catholique de Louvain. Voie du Roman Pays 34, 1348 Louvain-la-Neuve, Belgium. E-mail: \texttt{ilyes.mezghani@uclouvain.be}}}
\affil[2]{{\small Department of Statistics and Operations Research,
The University of North Carolina at Chapel Hill. 333 Hanes Hall,    
Chapel Hill, NC 27599, USA. E-mail: \texttt{quoctd@email.unc.edu}}}
\affil[3]{{\small Politehnica University of Bucharest,
Department of Automatic Control and Systems Engineering.
Splaiul Independentei nr. 313, sector 6,
060042 Bucharest,
Romania. E-mail: \texttt{ion.necoara@acse.pub.ro}}}
\affil[4]{{\small Center for Operations Research and Econometrics, Universit\'e Catholique de Louvain. Voie du Roman Pays 34, 1348 Louvain-la-Neuve, Belgium. E-mail: \texttt{anthony.papavasiliou@uclouvain.be}}}
\date{}
\begin{document}
\maketitle
\begin{abstract}
We propose a globally convergent Gauss-Newton algorithm for finding a local optimal solution of a non-convex and possibly non-smooth optimization problem.  
The algorithm that we present is based on a Gauss-Newton-type iteration for the non-smooth penalized formulation of the original problem. 
We establish a global convergence rate for this scheme from any initial point to a stationary point of the problem while using an exact penalty formulation. 
Under some more restrictive conditions we also derive local quadratic convergence for this scheme.  
We apply our proposed algorithm to solve the {Alternating Current} optimal power flow problem on meshed electricity networks, which is a fundamental application in power systems engineering. 
We verify the performance of the proposed method by {showing} comparable behavior with IPOPT, a well-established solver. We perform our validation on several representative instances of the optimal power flow problem, which are sourced from the MATPOWER library. 
\end{abstract}

\textbf{Keywords: } Nonlinear programming, Optimization with non-convex constraints, penalty reformulation, Gauss-Newton method, AC optimal power flow.

\newpage
\beforesec
\section{Introduction}\label{sec:intro}

\paragraph{Statement of the problem.}
In this paper we are interested in solving the following optimization problem with non-convex constraints:
\begin{equation}
\label{eq:constr_prob}
\min_{\boldsymbol{x}  \in \mathbb{R}^d} f(\boldsymbol{x})~~\text{s.t.}~~\Psi(\boldsymbol{x}) = 0, ~ \boldsymbol{x} \in \Omega.
\end{equation}

For this problem we assume that the objective function $f$ is convex and differentiable, $\Omega$ is a compact convex set, and the non-convexity enters in \eqref{eq:constr_prob} through the non-linear equality constraints  $\Psi(\boldsymbol{x}) = 0$, defined by $\Psi : \mathbb{R}^d\to \mathbb{R}^n$. 

Note that in the literature we can find many efficient algorithms that are able to minimize  non-convex objective functions, but with convex constraints, see e.g., \cite{CheSun:19,BolSab:14, PatNec:15,Nocedal2006}. In this paper, we treat a more general optimization model, where the non-convexity enters into the optimization problem through the constraints. 

It is well-known that  optimization problems with non-convex constraints are more difficult to solve than convex constrained problems \cite{Nocedal2006}.  
For the non-convex problem \eqref{eq:constr_prob}, classical non-convex optimization algorithms such as interior-point, augmented Lagrangian, penalty, Gauss-Newton, and sequential quadratic programming methods can only aim at finding a stationary point (i.e. a point that satisfies the first-order optimality conditions), which is a candidate for a local minimum \cite{Nocedal2006, CarGou:11}. 
{Nevertheless, convergence guarantees of these methods rely on the twice continuous differentiability of the underlying functionals, including the representation of $\Omega$.
Our method also approximates a stationary point, but it allows $f$ to include nonsmooth convex terms and $\Omega$ to be a general convex set (see Subsection~\ref{subsec:bmi_opt}).  
}

For an iterative method to identify a stationary point that is a local minimum, but not a saddle-point, more sophisticated techniques are required, such as cubic regularization \cite{Nesterov2006} or random noise gradient \cite{DauPas:14}.   
However, it is still unclear how to efficiently implement these methods for solving large-scale optimization problems with non-convex constraints. One of the most efficient and well-established non-linear solvers for finding stationary points is IPOPT \cite{Wachter2006}, which relies on a primal-dual interior-point method combined with other advanced techniques. 
We emphasize that  this classical method is only guaranteed to converge to a stationary point, and often requires a strategy such as line-search, filter, or trust-region to achieve global convergence (i.e. the method is still convergent from a starting point that is far from the targeted stationary point) under certain restrictive assumptions.  
{These methods often assume that the underlying functionals in \eqref{eq:constr_prob} are twice continuously differentiable, including $f$ and  $\Omega$.
In addition, a linesearch procedure may require that the gradient of the underlying merit function is Lipschitz continuous to guarantee global convergence, see, e.g., \cite[Theorem 3.2.]{Nocedal2006}.
}

\paragraph{A Gauss-Newton algorithm for constrained non-convex optimization.}
Recently there has been a revived interest in the design and analysis of algorithms for solving optimization problems involving non-convex constraints, in particular in  engineering and machine learning \cite{boob2019proximal, bolte2016majorization, cartis2014complexity, curtis2018complexity, TranDinh2012}. 
The main trend is in solving large-scale problems by exploiting special structures/properties of the problem model  and data towards the design of  simple schemes (e.g.,  solving a tractable convex subproblem at each iteration), while producing reasonable approximate solutions efficiently \cite{drusvyatskiy2019efficiency, Lewis2008, Nesterov2007g}. 

Following this trend, in this paper we also devise a  provable convergent  Gauss-Newton (GN)-type algorithm for solving the non-convex optimization problem \eqref{eq:constr_prob}.  
{The idea of the GN method studied in this paper was proposed in \cite{Lewis2008} for minimizing a compositional model of the form $\phi(F(x))$, where $\phi$ is possibly nonsmooth.
This method targets a different problem class compared to standard GN or Levenberg--Marquardt methods for nonlinear least-squares problems. 
The main idea is to replace the non-Lipschitz continuous least-squares function $\phi(\cdot) = \frac{1}{2}\Vert \cdot\Vert_2^2$ in these methods by a given convex and Lipschitz continuous function $\phi(\cdot)$ (but possibly nonsmooth).
Nesterov contributed a thorough investigation on convergence guarantees of this method in \cite{Nesterov2007g} when $\phi$ is a given norm.
This was extended to a more general model that can cover exact penalty methods in a technical report \cite{Tran-Dinh2011}.
Very recently, \cite{drusvyatskiy2019efficiency} revisited this method under a Moreau's envelope perspective, but only on global convergence guarantees.
}

Our algorithm converges globally to a stationary point of the problem in the sense that, starting from any initial point within a given level set, the algorithm converges to a stationary point.
In addition, the proposed approach is different from standard GN methods in the literature \cite{Nocedal2006, Deuflhard2006} due to the use of a non-smooth penalty instead of a classical quadratic penalty term.  
This allows our algorithm to converge globally \cite{Nes:07}. 
Hence, we refer to this algorithm as a \textit{global} GN scheme.  

The main idea of our method is to keep the convex sub-structure of the original problem  unchanged and to convexify the non-convex part by exploiting penalty theory and the GN framework.   
Hence, in contrast to IPOPT (solving a linear system of the barrier problem to obtain a Newton search direction), each iteration of our algorithm requires finding the solution of a strongly convex subproblem, which can be  efficiently solved by many existing convex solvers. 
Under some more restrictive conditions, we also derive a local quadratic convergence rate for our GN method.

\paragraph{The optimal power flow problem.} 
We apply our proposed optimization algorithm to  the alternating current optimal power flow (AC-OPF) problem \cite{FraSte:12}, which lies at the heart of short-term power system operations \cite{Cain2012}. We show that this problem can be posed in the framework of non-convex optimization with the particular structure on the constraints as in  \eqref{eq:constr_prob}. The optimal power flow (OPF) problem \cite{Carpentier1962}  consists in finding an optimal operating point of a power system while minimizing a certain objective (typically power generation cost), subject to the Kirchhoff's power flow equations and  various network and control operating limits.   

In recent years, there has been a great body of literature that has focused on convex relaxations of the AC-OPF problem, including semidefinite programming relaxations \cite{LavLow:12,Kocuk2016SDP}, conic relaxations \cite{Jabr2008,Gan2015,Kocuk2016SOCP}, and quadratic relaxations \cite{Coffrin2016}. These works have established conditions under which these relaxations are exact, and understanding cases in which this is not so \cite{Molzahn2013}.  However, 
when these relaxations are inexact, the resulting dispatch is possibly non-implementable. Therefore, our interest in the present paper is to tackle directly this problem as a non-convex optimization problem with non-linear equality constraints. 



\paragraph{Contributions.}
The main contributions of the paper are the following:
\begin{itemize}
\item[(i)] 
We propose a new GN algorithm for solving a general  class of optimization problems with non-convex constraints. 
We utilize an exact non-smooth penalty reformulation of the original problem and suggest a GN scheme to solve this penalized problem, where the subproblem in this scheme is a strongly convex program, which can be efficiently solved by several recent and highly efficient third-party convex optimization solvers. 
{
Since our proposed approach preserves convexity in $f$ and $\Omega$ of \eqref{eq:constr_prob} in the subproblem, our method can solve a broader class of problems with theoretical guarantee than classical IP or SQP methods. 
In particular, our method can solve problem instances of \eqref{eq:constr_prob} with nonsmooth convex objective terms or semidefinite cone constraints, see Subsection~\ref{subsec:bmi_opt} for a concrete example.
}

\item[(ii)] 
We establish the best-known global  convergence rate (i.e., convergence from any starting point in a given sublevel set) for our method to a stationary point, which is a candidate for a local optimal solution.
Moreover, under some more restrictive conditions, we also derive a local quadratic convergence for our scheme.  

%


\item[(iii)] We apply our proposed algorithm to the quadratic formulation of AC-OPF. We show that the newly developed algorithm can be implemented efficiently on AC-OPF problems and test it on several numerical examples from the well-known MATPOWER library \cite{ZimMur:11}. 
We often observe comparable performance to the well-established and widely-used IPOPT solver. {Also, since our approach relies on an iterative scheme, using the previous iterate to accelerate the computation of the next iterate can significantly improve performance: this is what we refer to as warm-start. Even though most solvers do not possess warm-start capabilities for this type of problems, we show how warm-start can significantly improve the performances of the proposed method. Section 3 provides details regarding performance and potential improvements.}
\end{itemize}

We emphasize that, as opposed to the classical GN approach, our proposed algorithm relies on the $\ell_1$-norm penalty, which typically has a better condition number than the quadratic penalty, as discussed in \cite{Nesterov2007g}.
More generally, if we replace this $\ell_1$-penalty with any other exact and Lipschitz continuous penalty function, then our theoretical results still hold.
Unlike certain sophisticated methods such as Interior-Point Methods  (IPMs) and Sequential Quadratic Programming (SQP) schemes, our GN method is simple to implement. Its workhorse is the solution of a strongly convex problem. 
As a result, its efficiency depends on the efficiency of a third-party solver for this problem as well as the benefit of warm-start strategies. 
Our main motivation is to exploit recent advances in large-scale convex optimization in order to create a flexible algorithm that can reuse this resource.




\paragraph{Content.}
The paper is organized as follows.  In Section~\ref{sec:GN_alg}, we introduce our Gauss-Newton algorithm and analyze its convergence properties. 
In Section~\ref{sec:modeling}, we present the AC-OPF problem, its quadratic reformulation and test our Gauss-Newton algorithm on several representative  MATPOWER test cases.  

\aftersec

\beforesec 
\section{A Gauss-Newton Algorithm for Non-Convex Optimization}\label{sec:GN_alg}
\aftersec

In this section, we present the main assumptions for the non-convex optimization problem \eqref{eq:constr_prob}, propose an exact penalty reformulation, and solve it  using a Gauss-Newton-type algorithm.  We further characterize the global and local  convergence rates of our algorithm. 

\beforesubsec
\subsection{Exact penalty approach for constrained non-convex programming}
For the non-convex optimization problem \eqref{eq:constr_prob} we assume  that the objective function $f$ is convex and differentiable and $\Omega$ is a compact convex set. 
Note that our method developed in the sequel can also be extended non-smooth convex function $f$ or smooth non-convex function $f$ whose gradient is Lipschitz continuous, but we make this assumption for simplicity of presentation.
Furthermore,  the non-convexity enters into the optimization problem through the non-linear equality constraints  $\Psi(\boldsymbol{x}) = 0$ defined by $\Psi : \mathbb{R}^d\to \mathbb{R}^n$.  We assume that $\Psi$ is differentiable and its Jacobian  $\Psi'$ is Lipschitz continuous, i.e. there exists $L_{\Psi} >0$ such that:
\begin{equation*}
\Vert \Psi'(\xb) - \Psi'(\hat{\xb})\Vert \leq L_{\Psi}\Vert \xb - \hat{\xb}\Vert \quad  \forall  \xb, \hat{\xb} \in \Omega, 
\end{equation*}
where $\Vert\cdot\Vert$ is the $\ell_2$-norm.  Further, let $\mathcal{N}_{\Omega}$ denote the normal cone of the convex set $\Omega$:
\begin{equation*} 
\mathcal{N}_{\Omega}(\xb) := \begin{cases} \left\{ \wb\in\mathbb{R}^d~\mid~ \wb^{\top}(\yb - \xb) \geq 0,~ \forall\yb\in\Omega \right\}, \ \text{if}~ \xb\in\Omega\\
\emptyset, \quad \text{otherwise}.
\end{cases}
\end{equation*}
Since problem~\eqref{eq:constr_prob} is non-convex, our goal is to search for a stationary point 
of this optimization problem  that is a candidate for a  local optimum in the following sense.
\begin{definition}[\cite{Nocedal2006}(Theorem 12.9)]\label{de:kkt_and_stationary}
A point $(\xb^{*}, \yb^{*})$ is said to be a KKT point of \eqref{eq:constr_prob} if it satisfies the following conditions:
\begin{equation}\label{eq:kkt_cond}
-\nabla f(\xb^{*}) - \Psi'(\xb^{*})\yb^{*} \in \mathcal{N}_{\Omega}(\xb^{*}), \quad \xb^{\ast}  \in \Omega, \quad \text{and}\quad \Psi(\xb^{*}) = 0. 
\end{equation}
Here, $\xb^{*}$ is called a stationary point of \eqref{eq:constr_prob}, and $\yb^{*}$ is the corresponding multiplier. 
Let $\mathcal{S}^{*}$ denote the set of these stationary points.
\end{definition}

\noindent Since $\Omega$ is compact, and $\Psi$ and $f$ are continuous, by the well-known Weierstrass theorem, we have:

\begin{proposition}\label{pro:global_solution}
If $\Omega\cap\set{ \xb \mid \Psi(\xb) = 0}\neq\emptyset$, then  \eqref{eq:constr_prob} has global optimal solutions.
\end{proposition}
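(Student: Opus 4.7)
The plan is to apply the classical Weierstrass extreme value theorem, since all the hypotheses needed for it are already built into the assumptions of Section~2.1. The only actual work is checking that the feasible set is compact and that the objective is continuous on it.

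First I would define the feasible set $\mathcal{F} := \Omega \cap \{\xb \in \mathbb{R}^d \mid \Psi(\xb) = 0\}$, which is nonempty by hypothesis. Next I would argue that $\mathcal{F}$ is closed: $\Omega$ is compact (hence closed) by assumption, and the level set $\{\xb \mid \Psi(\xb) = 0\} = \Psi^{-1}(\{0\})$ is closed because $\Psi$ is differentiable and therefore continuous, and $\{0\}$ is closed in $\mathbb{R}^n$. The intersection of two closed sets is closed. Since $\mathcal{F} \subseteq \Omega$ and $\Omega$ is bounded, $\mathcal{F}$ is also bounded, hence compact by Heine--Borel.

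Then I would invoke continuity of $f$, which follows from differentiability of $f$ stated at the beginning of Subsection~2.1. Applying the Weierstrass theorem to the continuous function $f$ on the nonempty compact set $\mathcal{F}$ yields the existence of at least one $\xb^{*} \in \mathcal{F}$ with $f(\xb^{*}) \le f(\xb)$ for every $\xb \in \mathcal{F}$. This $\xb^{*}$ is, by definition, a global optimal solution of \eqref{eq:constr_prob}.

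There is no real obstacle here: the statement is a direct consequence of the standing regularity and compactness assumptions. The only subtlety worth flagging is that compactness of $\mathcal{F}$ uses \emph{both} the compactness of $\Omega$ (to get boundedness) and the continuity of $\Psi$ (to get closedness of the equality-constraint set); neither alone would be enough if one dropped the hypothesis that $\Omega$ itself is compact. Accordingly, the writeup will be short — essentially a one-paragraph verification of the Weierstrass hypotheses.
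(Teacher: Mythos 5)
Your proof is correct and follows exactly the route the paper intends: the paper's one-line justification ("Since $\Omega$ is compact, and $\Psi$ and $f$ are continuous, by the well-known Weierstrass theorem...") is precisely your argument, which you have merely spelled out in full (closedness of $\Psi^{-1}(\{0\})$ by continuity, boundedness from $\Omega$, hence compactness of the feasible set, then Weierstrass). No differences to report.
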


\beforesubsec
\subsection{Exact penalized formulation}
Associated with \eqref{eq:constr_prob}, we consider its exact penalty form \cite[Chapt. 17.3]{Nocedal2006}:
\begin{equation}\label{eq:NLP}
\min_{\xb \in\Omega}  \Big\{ F(\xb) := f(\xb) + \beta \vert\Psi(\xb)\vert \Big\},
\end{equation}
where $\beta > 0$ is a penalty parameter, and $\vert\cdot\vert$ is the $\ell_1$-norm. Two reasons for choosing an exact (non-smooth) penalty are as follows. 
First, for a certain finite choice of the  parameter $\beta$, a single minimization in $x$ of \eqref{eq:NLP} can yield an exact solution of the original problem \eqref{eq:constr_prob}. 
Second, it does not square the condition number of $\Psi$ as in the case of quadratic penalty methods, thus making our algorithm presented below more robust to ill-conditioning of the non-convex  constraints. 
Now, we summarize the relationship between stationary points of \eqref{eq:constr_prob} and of its penalty form \eqref{eq:NLP}. For this, let us define the directional derivative:
\begin{equation}\label{eq:dir_deriv}
DF(\xb^{*})[\db] := \nabla f(\xb^{*})^{\top}\db + \beta \xi(\xb^{*})^{\top} \Psi'(\xb^{*})^{\top} \db,
\end{equation}  
where $\xi(\xb^{*}) \in \partial\vert \Psi(\xb^{*})\vert$ is one subgradient of $\vert\cdot\vert$ at $\Psi(\xb^{*})$, and $\partial{\vert\cdot\vert}$ denotes the subdifferential of $\vert\cdot\vert$, see \cite{Nes:07}.  
Recall that the necessary optimality condition of \eqref{eq:NLP} is
\begin{equation*}\label{eq:FONC}
0 \in \nabla f(\xb^{*}) + \beta \Psi'(\xb^{*})  \partial\vert\Psi(\xb^{*})\vert + \mathcal{N}_{\Omega}(\xb^{*}).
\end{equation*}
Then, this condition can be expressed equivalently as
\begin{equation}\label{eq:FONC_new}
DF(\xb^{*})[\db] \geq 0, ~~\forall \db \in \mathcal{F}_{\Omega}(\xb^{*}),
\end{equation}
where $\mathcal{F}_{\Omega}(\xb)$ is the  set of feasible directions to $\Omega$ at $\xb$:
\begin{equation}\label{eq:F_Omega}
\mathcal{F}_{\Omega}(\xb) := \left\{ \db\in \mathbb{R}^d \mid \db = t(\yb - \xb), ~\forall \yb\in\Omega, ~t\geq 0 \right\}.
\end{equation}
Any point $\xb^{*}$ satisfying \eqref{eq:FONC_new} is called a stationary point of the penalized problem \eqref{eq:NLP}. Stationary points are candidates for local minima, local maxima, and saddle-points.
If, in addition, $\xb^{*}$ is feasible to \eqref{eq:constr_prob}, 
then we say that $\xb^{*}$ is a feasible stationary point.
Otherwise, we say that $\xb^{*}$ is an infeasible stationary point.  Proposition~\ref{pro:penalty_stationary} shows the relation between  \eqref{eq:constr_prob} and \eqref{eq:NLP}.

\begin{proposition}[\cite{Nocedal2006}, (Theorem 17.4.)]\label{pro:penalty_stationary}
Suppose that $\xb^{*}$ is a feasible stationary point of \eqref{eq:NLP} for $\beta$ sufficiently large.
Then, $\xb^{*}$ is also stationary point of the original problem~\eqref{eq:constr_prob}.
\end{proposition}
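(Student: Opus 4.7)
The plan is to reduce the claim to a direct translation of the first-order stationarity condition of the penalized problem \eqref{eq:NLP} into the KKT system \eqref{eq:kkt_cond} of the original problem \eqref{eq:constr_prob}, exploiting the feasibility $\Psi(\xb^{*}) = 0$ to identify a suitable Lagrange multiplier $\yb^{*}$ out of the subdifferential of the $\ell_1$-norm at the origin.

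Concretely, I would start from the first-order necessary optimality condition for \eqref{eq:NLP} that is recalled just above the proposition, namely
\begin{equation*}
0 \in \nabla f(\xb^{*}) + \beta\, \Psi'(\xb^{*})\, \partial \vert \Psi(\xb^{*})\vert + \mathcal{N}_{\Omega}(\xb^{*}).
\end{equation*}
Since $\xb^{*}$ is assumed feasible, $\Psi(\xb^{*}) = 0$, and the subdifferential of $\vert\cdot\vert$ at the origin equals the closed unit ball of the dual $\ell_{\infty}$-norm. Therefore there exists $\xi^{*} \in \mathbb{R}^n$ with $\norm{\xi^{*}}_{\infty} \leq 1$ such that
\begin{equation*}
-\nabla f(\xb^{*}) - \beta\, \Psi'(\xb^{*})\, \xi^{*} \in \mathcal{N}_{\Omega}(\xb^{*}).
\end{equation*}
Setting $\yb^{*} := \beta\, \xi^{*}$ and collecting the three ingredients $\xb^{*}\in\Omega$, $\Psi(\xb^{*}) = 0$, and the above inclusion yields exactly the KKT system \eqref{eq:kkt_cond}, which shows that $(\xb^{*}, \yb^{*})$ is a KKT point and hence $\xb^{*}$ is a stationary point of \eqref{eq:constr_prob} in the sense of Definition~\ref{de:kkt_and_stationary}.

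The only subtlety I expect is to justify cleanly the subdifferential chain rule $\partial(\vert\Psi(\cdot)\vert)(\xb^{*}) \subseteq \Psi'(\xb^{*})^{\top}\partial\vert\cdot\vert(\Psi(\xb^{*}))$ that is used implicitly when passing from $DF(\xb^{*})[\db] \geq 0$ on $\mathcal{F}_{\Omega}(\xb^{*})$ to the inclusion displayed above. Since $\vert\cdot\vert$ is convex and Lipschitz and $\Psi$ is $C^{1}$, this is standard (e.g.\ by the composite calculus for Clarke or convex subdifferentials under smoothness of the inner map), so no constraint qualification on $\Psi$ is required at this stage. Finally, I would comment that the qualifier ``$\beta$ sufficiently large'' is carried over from the classical two-sided statement in \cite{Nocedal2006}; for the specific direction proved here (feasible penalty stationary point $\Rightarrow$ original stationary point), any $\beta>0$ in fact suffices, because the candidate multiplier $\yb^{*} = \beta\xi^{*}$ is built directly from an element of the unit $\ell_{\infty}$-ball without requiring $\beta$ to dominate a pre-existing multiplier norm.
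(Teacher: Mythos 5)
Your argument is correct, and it is worth noting that the paper itself offers no proof of this proposition: it is simply cited from \cite{Nocedal2006} (Theorem~17.4), so there is nothing to compare against line by line. Your route is the natural self-contained one, and it fits the paper's framework exactly: you start from the inclusion $0 \in \nabla f(\xb^{*}) + \beta\,\Psi'(\xb^{*})\,\partial\vert\Psi(\xb^{*})\vert + \mathcal{N}_{\Omega}(\xb^{*})$, which the paper explicitly records as the necessary optimality condition for \eqref{eq:NLP}, use feasibility to replace $\partial\vert\cdot\vert(\Psi(\xb^{*}))$ by $\partial\vert\cdot\vert(0)$, i.e.\ the unit $\ell_{\infty}$-ball, and read off the multiplier $\yb^{*}=\beta\xi^{*}$ so that \eqref{eq:kkt_cond} holds verbatim. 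Your side remarks are also accurate: the chain-rule inclusion for the convex-composite $\vert\Psi(\cdot)\vert$ with $\Psi$ of class $C^{1}$ holds without any constraint qualification (the composition is Clarke regular), and the hypothesis ``$\beta$ sufficiently large'' is indeed vacuous for this particular implication --- it is only needed for the converse-type statements (exactness of the penalty, feasibility of penalized minimizers, or the bound $\beta>\Vert\yb^{*}\Vert_{\infty}$ discussed after the proposition), since here the multiplier is manufactured from the subdifferential rather than required to fit inside it. The one thing your write-up quietly does better than the cited reference is handling the abstract convex set $\Omega$ through the normal cone rather than through explicit inequality constraints; that adaptation is exactly what the paper needs for its setting, and your proof supplies it.
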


\noindent 
{Proposition~\ref{pro:penalty_stationary} requires $\xb^{*}$ to be feasible for \eqref{eq:constr_prob}.
When the feasible set $\Omega\cap\set{ \xb \mid \Psi(\xb) = 0}\neq\emptyset$ of \eqref{eq:constr_prob} is nonempty and bounded, according to \cite[Proposition 2]{di1994exact}, if \eqref{eq:constr_prob} satisfies the extended Mangarasian-Fromovitz constrained qualification condition (see \cite[Proposition 2]{di1994exact} for concrete definition), then there exists $\beta_{*} > 0$ such that for any $\beta > \beta_{*}$, every global or local solution of the penalized problem \eqref{eq:NLP} is also a global or local optimal solution of \eqref{eq:constr_prob}, respectively.
By  \cite[Proposition 3]{di1994exact}, $\beta$ needs to be chosen such that $\beta >  \beta_{*} := \|\yb^{*}\|_\infty$, where $\yb^{*}$ is any optimal Lagrange multiplier of \eqref{eq:constr_prob}. 
We will discuss in detail the choice of $\beta$ in  the sections below.}

\beforesubsec
\subsection{Global Gauss-Newton method}
We first develop our GN algorithm. 
Then, we investigate its global  convergence rate.

\subsubsection{The derivation of the Gauss-Newton scheme and the full algorithm}
Our GN method aims at solving the penalized problem \eqref{eq:NLP} using the following convex subproblem:
\begin{equation}\label{eq:GN_dir}
{\!\!\!\!\!\!\!\!\!\!}\begin{array}{ll}
&\displaystyle\min_{\xb \in \Omega} \Big\{ \mathcal{Q}_L(\xb ; \xb^k) := f(\xb)  +  \beta \vert \Psi(\xb^k) + \Psi'(\xb^k)(\xb - \xb^k)\vert  + \tfrac{L}{2} \Vert \xb - \xb^k \Vert^2 \Big\},
\end{array}{\!\!\!\!}
\end{equation}
where $\xb^k$ is a given point in $\Omega$ for linearization, $\Psi'(\cdot)$ is the Jacobian of $\Psi$, and $L > 0$ is a regularization parameter. 

Note that our subproblem \eqref{eq:GN_dir} differs  from those used  in classical penalty methods \cite{Nocedal2006}, since we linearize the constraints and we also add a regularization term. 
Thus, the objective function of \eqref{eq:GN_dir} is strongly convex.
Hence, if $\Omega$ is nonempty {and even if the problem is non-differentiable}, this problem admits a unique optimal solution, and can be solved efficiently by several convex methods and solvers. 
{For instance, alternating direction methods of multipliers (ADMM) \cite{Boyd2011} and primal-dual schemes \cite{Chambolle2011} can be efficient for solving \eqref{eq:GN_dir}.
Note that the convergence guarantees of ADMM and primal-dual schemes often depends on the distance between the initial point $\xb^{k,0}$ of the algorithm and the exact optimal solution of $\bar{\xb}^{k+1}$ of \eqref{eq:GN_dir}, see, e.g, \cite[Theorem 2]{Chambolle2011}.
Hence, if we warm-start $\xb^{k,0}$ at the previous approximate solution $\xb^k$ obtained at the $(k-1)$-th iteration, then the distance $\Vert \xb^0 - \bar{\xb}^{k+1}\Vert$ is small. This allows the algorithm to converge faster to a desired approximate solution $\xb^{k+1}$ of \eqref{eq:GN_dir} at the $k$-th iteration.
}

Let us define
\begin{equation}\label{eq:subprob1_a}
{\!\!\!\!}\begin{array}{ll}
&\Vb_L(\xb^k) :=  \text{arg}\!\displaystyle\min_{x\in\Omega}\Big\{ \mathcal{Q}_L(\xb; \xb^k) := f(\xb)  +  \beta \vert \Psi(\xb^k) + \Psi'(\xb^k)(\xb - \xb^k)\vert + \frac{L}{2} \Vert \xb - \xb^k \Vert^2 \Big\}.{\!\!\!\!}
\end{array}
\end{equation}
The necessary and sufficient optimality condition for subproblem \eqref{eq:GN_dir} becomes 
\begin{equation}\label{eq:subprob1_b}
{\!\!\!\!\!\!\!}\begin{array}{ll}
&\left[\nabla{f}(\Vb_L(\xb^k)) + L(\Vb_L(\xb^k) - \xb^k) + \beta \Psi'(\xb^k)  \xi(\xb^k)\right]^{\top}(\hat{\xb} - \Vb_L(\xb^k)) \geq 0, ~~\forall \hat{\xb}\in\Omega,
\end{array}{\!\!\!\!}
\end{equation}
where $\xi(\xb^k) \in \partial\vert \Psi(\xb^k) + \Psi'(\xb^k)(\Vb_L(\xb^k) - \xb^k)\vert$. Given $\Vb_L(\xb^k)$, we define the following quantities:
\begin{align}\label{eq:gradient_mapping}
&\Gb_L(\xb^k) := L(\xb^k - \Vb_L(\xb^k)),~\db_L(\xb^k) := \Vb_L(\xb^k) - \xb^k,~~\text{and}~~r_L(\xb^k) := \Vert \db_L(\xb^k)\Vert. 
\end{align}
Then, $\Gb_L(\cdot)$ can be considered as a gradient mapping of $F$ in \eqref{eq:NLP} \cite{Nes:07}, and $\db_L(\xb^k)$ is a search direction for Algorithm~\ref{alg:A1}. 
{As we will see later in Lemma~\ref{le:well_define}}, $L$ should be chosen such that $0 < L \leq \beta L_{\Psi}$. Now, using the subproblem \eqref{eq:GN_dir} as a main component, we describe our GN scheme in Algorithm \ref{alg:A1}.

\begin{algorithm}\caption{{\!}(\textit{The Basic Gauss-Newton Algorithm}){\!\!\!\!}}\label{alg:A1}
\begin{normalsize}
\begin{algorithmic}[1]
	\State {\hskip0ex}\textbf{Initialization:}
	Choose $\xb^0 \in \Omega$ and a penalty parameter $\beta > 0$ sufficiently large (ideally, $\beta > \Vert\yb^{*}\Vert_\infty$).
	\State Choose a lower bound $L_{\min} \in (0,  \beta L_{\Psi}]$. 
	\vspace{1ex}
	\State \textbf{For $k := 0$ to $k_{\max}$ perform}
		\vspace{1ex}
		\State{\hskip2ex}\label{step:A_1_GN_step}
		Find $L_k \in [L_{\min}, \beta L_{\Psi}]$ such that $F(\Vb_{L_k} (\xb^k)) \leq \mathcal{Q}_{L_k} (\Vb_{L_k}(\xb^k); \xb^k)$ (see Lemma~\ref{le:well_define}).
                 \vspace{1ex}
                 \State{\hskip2ex}\label{step:A1_x_update}
                 Update $\xb^{k+1} := \Vb_{L_k} (\xb^k)$. 
                 \State{\hskip2ex}\label{step:A1_beta_update}
                 Update $\beta$  if necessary.
	\State\textbf{End~for}
\end{algorithmic}
\end{normalsize}
\end{algorithm}

The main step of Algorithm~\ref{alg:A1} is the solution of the convex subproblem  \eqref{eq:GN_dir} at Step~\ref{step:A_1_GN_step}.
As mentioned, this problem is strongly convex, and can be solved by several methods that converge linearly.
If we choose $L_k \equiv L \geq \beta L_{\Psi}$, then we do not need to perform a line-search on $L$ at Step~\ref{step:A_1_GN_step}, and {only need to solve \eqref{eq:GN_dir} once per iteration}.
However, {$L_{\Psi}$ may not be known} or if it is known, the global upper bound $\beta L_{\Psi}$ may be too conservative, i.e. it does not take into account the local structures of non-linear functions in \eqref{eq:NLP}.
{Therefore, following the algorithm in \cite{Nes:07}, we propose performing a line-search in order to find an appropriate $L_k$.
If we perform a line-search by doubling $L_k$ at each step starting from $L_{\min}$, (i.e., $L_k \rightarrow 2L_k$), then after $i_k$ line-search steps, we have $L_k = 2^{i_k}L_{\min}$, and  the number of line-search iterations $i_k$ is at most $\lfloor\log_2(\beta L_{\Psi}/L_{\min})\rfloor + 1$.
Note that it is rather straightforward to estimate $L_{\min}$. For example, we can set $L_{\min} := \frac{c\beta\Vert\Psi'(\hat{\xb}^0) - \Psi'(\xb^0)\Vert}{\Vert\hat{\xb}^0 - \xb^0\Vert} \leq \beta L_{\Psi}$ for some $\hat{\xb}^0\neq\xb^0$ and $c \in (0, 1]$.} 
The penalty parameter $\beta$ can be fixed or can be updated gradually using a run-and-inspect strategy, see next section.  

\subsubsection{Global convergence analysis}
We first  summarize some properties of Algorithm~\ref{alg:A1} when the penalty parameter $\beta$ {is fixed at a given positive value for all iterations}.

\begin{lemma}\label{le:stationary_point}
Let $\Vb_L$ be defined by \eqref{eq:subprob1_a}, and $\Gb_L$, $\db_L$, and $r_L$ be defined by \eqref{eq:gradient_mapping}.
Then the following statements hold:
\begin{itemize}
\item[$\mathrm{(a)}$] 
If $\Vb_{L_k}(\xb^k) = \xb^k$,  then $\xb^k$ is a stationary point of \eqref{eq:NLP}.

\item[$\mathrm{(b)}$] 
The norm $\norms{\Gb_{L_k}(\xb^k)}$ is nondecreasing in ${L_k}$, and $r_{L_k}(\xb^k)$ is nonincreasing in ${L_k}$.
Moreover, we have
\begin{equation}\label{eq:f_f_rho}
F(\xb^k) - \mathcal{Q}_{L_k}(\Vb_{L_k}(\xb^k); \xb^k) \geq \frac{{L_k}}{2}r^2_{L_k}(\xb^k).
\end{equation}
\item[$\mathrm{(c)}$] 
If $\Psi'(\cdot)$ is Lipschitz continuous with the Lipschitz constant $L_{\Psi}$, then, for any $x\in\Omega$, we have
\begin{equation} \label{eq:key_est1}
{\!\!\!\!\!\!\!\!\!\!\!}\begin{array}{ll}
&F(\xb^k) - F(\Vb_{L_k}(\xb^k)) \geq  \frac{(2{L_k} - \beta L_{\Psi})}{2}r^2_{L_k}(\xb^k) = \frac{(2{L_k} - \beta  L_{\Psi})}{2 L_k^2}\norms{\Gb_{L_k}(\xb^k)}^2. \vspace{1ex}\\
&DF(\xb^k)[\db_{L_k}(\xb^k)] \leq - {L_k} r^2_{L_k}(\xb^k) = -\frac{1}{{L_k}}\norms{\Gb_{L_k}(\xb^k)}^2.
\end{array}{\!\!\!\!\!\!\!\!}
\end{equation}
\end{itemize}
\end{lemma}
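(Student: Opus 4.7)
The proof splits naturally according to the three parts of the statement, and the plan is to handle them in the order (a), (b), (c) because each part reuses ingredients from the previous one.

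For part (a), I would simply substitute $\Vb_{L_k}(\xb^k) = \xb^k$ into the optimality condition \eqref{eq:subprob1_b}. The regularization term $L_k(\Vb_{L_k}(\xb^k) - \xb^k)$ vanishes, and $\xi(\xb^k)$ now belongs to $\partial\vert\Psi(\xb^k)\vert$. The remaining inequality $[\nabla f(\xb^k) + \beta\Psi'(\xb^k)\xi(\xb^k)]^{\top}(\hat{\xb} - \xb^k) \geq 0$ for all $\hat{\xb}\in\Omega$ coincides with \eqref{eq:FONC_new}, certifying that $\xb^k$ is stationary for \eqref{eq:NLP}.

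For part (b), the inequality $F(\xb^k) - \mathcal{Q}_{L_k}(\Vb_{L_k}(\xb^k); \xb^k) \geq \frac{L_k}{2} r^2_{L_k}(\xb^k)$ is the easy half: since $\mathcal{Q}_{L_k}(\cdot;\xb^k)$ is $L_k$-strongly convex on $\Omega$ with minimizer $\Vb_{L_k}(\xb^k)$, applying the strong-convexity inequality at the comparison point $\xb^k\in\Omega$ and using $\mathcal{Q}_{L_k}(\xb^k;\xb^k) = F(\xb^k)$ yields the claim. For the monotonicity of $r_L$, I would fix $L_1 < L_2$ and rely on the identity $\mathcal{Q}_{L_2}(\xb;\xb^k) = \mathcal{Q}_{L_1}(\xb;\xb^k) + \frac{L_2 - L_1}{2}\norm{\xb - \xb^k}^2$; adding the two inequalities $\mathcal{Q}_{L_i}(\Vb_{L_i};\xb^k) \leq \mathcal{Q}_{L_i}(\Vb_{L_j};\xb^k)$ cancels the common part and leaves $(L_2 - L_1)(r^2_{L_1} - r^2_{L_2}) \geq 0$. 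The nondecreasingness of $\norm{\Gb_L} = L\cdot r_L$ is more delicate: one combines the first-order optimality conditions at $\Vb_{L_1}$ and $\Vb_{L_2}$ and exploits convexity of $f$ together with that of the linearized penalty, which is the standard monotonicity property of proximal operators adapted to the present subproblem.

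For part (c), the starting point is the descent estimate implied by Lipschitz continuity of $\Psi'$, namely $\norm{\Psi(\Vb_{L_k}(\xb^k)) - \Psi(\xb^k) - \Psi'(\xb^k)\db_{L_k}(\xb^k)} \leq \frac{L_{\Psi}}{2}\,r^2_{L_k}(\xb^k)$. Combined with the $\ell_1$ triangle inequality, this yields $\vert\Psi(\Vb_{L_k}(\xb^k))\vert \leq \vert\Psi(\xb^k) + \Psi'(\xb^k)\db_{L_k}(\xb^k)\vert + \frac{L_{\Psi}}{2}r^2_{L_k}(\xb^k)$. Substituting into $F(\Vb_{L_k}(\xb^k)) = f(\Vb_{L_k}(\xb^k)) + \beta\vert\Psi(\Vb_{L_k}(\xb^k))\vert$ and identifying the remaining terms as $\mathcal{Q}_{L_k}(\Vb_{L_k}(\xb^k);\xb^k) - \frac{L_k}{2}r^2_{L_k}(\xb^k)$ gives $F(\Vb_{L_k}(\xb^k)) \leq \mathcal{Q}_{L_k}(\Vb_{L_k}(\xb^k);\xb^k) + \frac{\beta L_{\Psi} - L_k}{2}r^2_{L_k}(\xb^k)$; adding the predicted-descent bound from (b) produces the first inequality of \eqref{eq:key_est1}. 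For the directional-derivative bound, I would set $\hat{\xb} = \xb^k$ in \eqref{eq:subprob1_b} to obtain $\langle\nabla f(\Vb_{L_k}(\xb^k)) + \beta\Psi'(\xb^k)\xi_k,\db_{L_k}(\xb^k)\rangle \leq -L_k r^2_{L_k}(\xb^k)$, where $\xi_k$ denotes the subgradient appearing there, then use monotonicity of $\nabla f$ to replace $\nabla f(\Vb_{L_k}(\xb^k))$ by $\nabla f(\xb^k)$, and convexity of $\vert\cdot\vert$ to replace $\xi_k$ by any $\xi(\xb^k)\in\partial\vert\Psi(\xb^k)\vert$; both substitutions only strengthen the inequality, yielding $DF(\xb^k)[\db_{L_k}(\xb^k)] \leq -L_k r^2_{L_k}(\xb^k)$. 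The main obstacle is the $L$-monotonicity of $\norm{\Gb_L}$ in (b); everything else is a direct orchestration of strong convexity, the descent lemma for $\Psi$, and standard subgradient inequalities for $f$ and $\vert\cdot\vert$.
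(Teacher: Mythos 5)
Your proposal is correct and arrives at all three conclusions, but in parts (b) and (c) it takes a genuinely different route from the paper. Part (a) is identical. For (b), you prove \eqref{eq:f_f_rho} from the $L_k$-strong convexity of $\mathcal{Q}_{L_k}(\cdot;\xb^k)$ at the comparison point $\xb^k$, and the monotonicity of $r_L$ from the two-point exchange argument; the paper instead introduces the value function $\eta(t):=\min_{\xb\in\Omega}\{f(\xb)+\beta\vert\Psi(\xb^k)+\Psi'(\xb^k)(\xb-\xb^k)\vert+\tfrac{1}{2t}\norms{\xb-\xb^k}^2\}$, observes that it is convex in $t$, computes $\eta'(t)=-\tfrac{1}{2}\norms{\Gb_{1/t}(\xb^k)}^2$, and reads off both monotonicity claims and \eqref{eq:f_f_rho} in one stroke from the monotonicity of $\eta'$ and the gradient inequality $\eta(0)\geq\eta(t)-t\,\eta'(t)$. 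That device disposes precisely of the step you flag as the main obstacle, the nondecreasingness of $\norms{\Gb_L}$, which you only sketch; your prox-monotonicity route does go through (monotonicity of the subdifferential of the linearized objective plus Cauchy--Schwarz gives $(\norms{\Gb_{L_2}}-\norms{\Gb_{L_1}})\bigl(\norms{\Gb_{L_1}}/L_1-\norms{\Gb_{L_2}}/L_2\bigr)\geq 0$, which forces $\norms{\Gb_{L_2}}\geq\norms{\Gb_{L_1}}$ when $L_2>L_1$), but it should be written out, and the paper's $\eta$-argument is the cheaper way to get it. For the first inequality of \eqref{eq:key_est1} you assemble it as \eqref{eq:f_f_rho} plus the Lipschitz estimate $F(\Vb_{L_k}(\xb^k))\leq\mathcal{Q}_{L_k}(\Vb_{L_k}(\xb^k);\xb^k)+\tfrac{\beta L_{\Psi}-L_k}{2}r_{L_k}^2(\xb^k)$ (essentially Lemma~\ref{le:well_define}), whereas the paper combines the optimality condition \eqref{eq:subprob1_b} at $\hat{\xb}=\xb^k$ with the convexity of $f$ and of $\vert\cdot\vert$ directly; both yield the same constant. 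Your handling of the directional-derivative bound is actually more careful than the paper's: you justify replacing $\nabla f(\Vb_{L_k}(\xb^k))$ by $\nabla f(\xb^k)$ and the linearized subgradient by one in $\partial\vert\Psi(\xb^k)\vert$ via monotonicity of the respective (sub)differentials, steps the paper elides by writing $DF(\xb^k)[\db_{L_k}(\xb^k)]$ directly in terms of $\nabla f(\Vb_{L_k}(\xb^k))$. Both your argument and the paper's share the same small imprecision of bounding the $\ell_1$-norm of the linearization error of $\Psi$ by $\tfrac{L_{\Psi}}{2}\norms{\cdot}^2$ without the $\sqrt{n}$ factor that the paper itself inserts in the proof of Theorem~\ref{th:local_quadratic_convergence}.
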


\begin{proof}
(a)~Substituting $\Vb_{L_k}(\xb^k) = \xb^k$ into \eqref{eq:subprob1_b}, we again obtain the optimality condition \eqref{eq:FONC_new}.
This shows that $\xb^k$ is a stationary point of \eqref{eq:NLP}.

(b)~Since the function $q(t, \xb) := f(\xb) + \beta \vert \Psi(\xb^k) + \Psi'(\xb^k)(\xb - \xb^k)\vert + \frac{1}{2t} \Vert \xb - \xb^k \Vert^2 $ is convex in two variables $\xb$ and $t$, 
we have that $\eta(t) := \min_{\xb\in \Omega}q(t, \xb)$ is still convex. 
It is easy to show that $\eta'(t) = -\frac{1}{2t^2}\Vert \Vb_{1/t}(\xb^k) - \xb^k\Vert^2 = -\frac{1}{2t^2}\Vert \db_{1/t}(\xb^k)\Vert^2 = \frac{1}{2}\Vert \Gb_{1/t}(\xb)\Vert^2$.
Since $\eta(t)$ is convex, $\eta'(t)$ is nondecreasing in $t$. 
This implies that $\Vert \Gb_{1/t}(\xb^k)\Vert$ is nonincreasing in $t$. 
Thus $\Vert \Gb_{L}(\xb^k)\Vert$ is nondecreasing in $L$ and $r_L(\xb^k) := \Vert \db_{L}(\xb^k)\Vert$ is nonincreasing in $L$. To prove \eqref{eq:f_f_rho}, note that the convexity of $\eta$ implies that
\begin{equation}\label{eq:lm21_eq1}
F(\xb^k) = \eta(0) \geq \eta(t) + \eta'(t)(0-t) = \eta(t) + \frac{1}{2t}\rb^2_{1/t}(\xb^k).
\end{equation} 
On the other hand, $\mathcal{Q}_{L}(\Vb_L(\xb^k); \xb^k) = \eta(1/L)$. 
Substituting this relation into \eqref{eq:lm21_eq1}, we obtain \eqref{eq:f_f_rho}. 

(c)~Let use define $\Vb_k := \Vb_{L_k}(\xb^k)$.
From the optimality condition \eqref{eq:subprob1_b}, for any $\xb\in\Omega$, we have
\begin{equation*} 
{\!\!\!\!}\begin{array}{ll}
\left[\nabla{f}(\Vb_k) + {L_k}(\Vb_k - \xb^k) + \beta \Psi'(\xb^k) \xi(\xb^k)\right]^{\top}(\xb - \Vb_k) \geq 0,
\end{array}{\!\!\!\!}
\end{equation*}
where $\xi(\xb^k) \in \partial\vert  \Psi(\xb^k) + \Psi'(\xb^k)(\Vb_k - \xb^k)\vert$.
Substituting $\xb  = \xb^k$ into this condition, we have
\begin{equation}\label{eq:proof3} 
\nabla{f}(\Vb_k)^{\top}(\xb^k  - \Vb_k ) + \beta \xi(\xb^k)^{\top} \Psi'(\xb^k)^{\top} (\xb^k - \Vb_k) \geq {L_k}  \Vert \Vb_k - \xb^k\Vert^2.
\end{equation}
{Since $f$ is convex, we have:
\begin{equation*}
\begin{array}{ll}
f(\xb^k) &\geq f(\Vb_k) + \nabla{f}(\Vb_k)^{\top}(\xb^k - \Vb_k). \vspace{1ex}
\end{array}
\end{equation*}
By exploiting the convexity of $\vert\cdot\vert$ at point $\Psi(\xb^k)  + \Psi'(\xb^k)(\Vb_k - \xb^k)$, we have:
\begin{equation*}
\begin{array}{ll}
\vert\Psi(\xb^k)\vert &\geq \vert \Psi(\xb^k)  + \Psi'(\xb^k)(\Vb_k - \xb^k)\vert  +   \xi(\xb^k)^{\top}\Psi'(\xb^k)^{\top}(\xb^k -\Vb_k).
\end{array}
\end{equation*}}
Since $\Psi'$ is Lipschitz continuous, we also have 
\begin{equation*}
{\!\!\!\!\!\!\!\!}\begin{array}{ll}
\vert\Psi(\Vb_k)\vert &\leq \vert \Psi(\xb^k)  + \Psi'(\xb^k)(\Vb_k - \xb^k)\vert  +  \vert \Psi(\Vb_k) - \Psi(\xb^k)  + \Psi'(\xb^k)(\Vb_k - \xb^k)\vert \vspace{1ex}\\
&\leq \vert  \Psi(\xb^k)  + \Psi'(\xb^k)(\Vb_k - \xb^k)\vert   +  \frac{L_{\Psi}}{2}\Vert \Vb_k - \xb^k\Vert^2.
\end{array}
\end{equation*}
Combining these three bounds, we can show that
\begin{equation*}
{\!\!\!\!\!\!}\begin{array}{ll}
f(\xb^k) + \beta\vert\Psi(\xb^k)\vert &\geq f(\Vb_k)  +  \beta\vert \Psi(\xb^k)  + \Psi'(\xb^k)(\Vb_k - \xb^k)\vert  +  {L_k} \Vert \Vb_k - \xb^k\Vert^2 \vspace{1ex}\\
&\geq f(\Vb_k) + \beta\vert\Psi(\Vb_k)\vert  +  {L_k} \Vert \Vb_k - \xb^k\Vert^2  - \frac{\beta L_{\Psi}}{2}\Vert \Vb_k - \xb^k\Vert^2,
\end{array}
\end{equation*}
which implies
\begin{equation*}
\begin{array}{ll}
F(\xb^k) &\geq F(\Vb_k) + \frac{(2{L_k}  - \beta L_{\Psi})}{2}\Vert \Vb_k - \xb^k\Vert^2.
\end{array}
\end{equation*}
Since $r^2_{L_k}(\xb^k) = \Vert \Vb_k - \xb^k\Vert^2 = \frac{1}{L_k^2}\Vert \Gb_{L_k}(\xb^k)\Vert^2$, we obtain the first inequality of \eqref{eq:key_est1} from the last inequality. Moreover, from \eqref{eq:dir_deriv} we have 
\begin{equation*} 
DF(\xb^k)[\db_{L_k}(\xb^k)] = \nabla{f}(\Vb_k)^{\top}(\Vb_k - \xb^k)  + \beta \xi(\xb^k)^{\top}\Psi'(\xb^k)^{\top}(\Vb_k - \xb^k).
\end{equation*}
Using \eqref{eq:proof3}, we can show that $DF(\xb^k)[\db_{L_k}(\xb^k)] \leq  - {L_k} \Vert \Vb_k - \xb^k\Vert^2$, which is the second inequality of  \eqref{eq:key_est1}.
\end{proof}

The proof of Statement $\mathrm{(a)}$ shows that if we can find $\xb^k$ such that $\norms{\Gb_{L_k}(\xb^k)}\leq\varepsilon$, then $\xb^k$ is an approximate stationary point of \eqref{eq:NLP} within the accuracy $\varepsilon$.
From statement $\mathrm{(b)}$, we can see that if the line-search condition $F(\Vb_{L_k}(\xb^k)) \leq \mathcal{Q}_{L_k}(\Vb_{L_k}(\xb^k);\xb^k)$ at Step~\ref{step:A_1_GN_step} holds, then $F(\Vb_{L_k}(\xb^k)) \leq F(\xb^k) - \frac{{L_k}}{2}r^2_{L_k}(\xb^k)$.
That is, the objective value $F(\xb^k)$ decreases at least by $\frac{{L_k}}{2}r^2_{L_k}(\xb^k)$ after the $k$-th iteration. We first claim that Algorithm~\ref{alg:A1} is well-defined.

\begin{lemma}\label{le:well_define}
Algorithm~\ref{alg:A1} is well-defined, i.e. step~\ref{step:A_1_GN_step} terminates after a finite number of iterations. That is, if $L \geq \beta L_{\Psi}$, then $F(\Vb_L(\xb^k)) \leq \mathcal{Q}_L(\Vb_L(\xb^k);\xb^k)$.
\end{lemma}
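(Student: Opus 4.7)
The plan is to reduce the claim to the ``descent-lemma'' style bound for $\Psi$ that is already implicit in the proof of Lemma~\ref{le:stationary_point}(c). Writing $\Vb_k := \Vb_L(\xb^k)$ for brevity, the inequality $F(\Vb_k) \le \mathcal{Q}_L(\Vb_k;\xb^k)$ becomes, after cancelling the common $f(\Vb_k)$ term on both sides,
\begin{equation*}
\beta\, \vert \Psi(\Vb_k) \vert \;\le\; \beta\, \vert \Psi(\xb^k) + \Psi'(\xb^k)(\Vb_k-\xb^k) \vert + \tfrac{L}{2}\norm{\Vb_k - \xb^k}^2.
\end{equation*}
So the entire task is to control $\vert \Psi(\Vb_k)\vert$ in terms of its linearization at $\xb^k$, which I would do by the usual two-step argument.

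First, I would apply the triangle inequality to split
\begin{equation*}
\vert \Psi(\Vb_k) \vert \le \vert \Psi(\xb^k) + \Psi'(\xb^k)(\Vb_k - \xb^k) \vert + \vert \Psi(\Vb_k) - \Psi(\xb^k) - \Psi'(\xb^k)(\Vb_k - \xb^k) \vert.
\end{equation*}
The second term is then bounded by the standard linearization error estimate derived from the Lipschitz continuity of $\Psi'$: integrating $\Psi'$ along the segment $[\xb^k, \Vb_k] \subset \Omega$ yields
\begin{equation*}
\vert \Psi(\Vb_k) - \Psi(\xb^k) - \Psi'(\xb^k)(\Vb_k - \xb^k) \vert \le \tfrac{L_{\Psi}}{2}\norm{\Vb_k - \xb^k}^2,
\end{equation*}
which is exactly the bound used in Lemma~\ref{le:stationary_point}(c). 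Multiplying by $\beta$ and combining with the triangle inequality above gives
\begin{equation*}
\beta\,\vert \Psi(\Vb_k)\vert \le \beta\, \vert \Psi(\xb^k) + \Psi'(\xb^k)(\Vb_k - \xb^k) \vert + \tfrac{\beta L_{\Psi}}{2}\norm{\Vb_k - \xb^k}^2.
\end{equation*}

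Second, I would invoke the hypothesis $L \ge \beta L_{\Psi}$ to replace $\beta L_{\Psi}$ by $L$ on the right-hand side, and then add $f(\Vb_k)$ to both sides. This produces exactly $F(\Vb_k) \le \mathcal{Q}_L(\Vb_k; \xb^k)$, proving the inequality. Finite termination of the line-search in Step~\ref{step:A_1_GN_step} then follows immediately: starting from $L_{\min}$ and doubling $L$ at each trial, the condition $L \ge \beta L_{\Psi}$ is met after at most $\lfloor \log_2(\beta L_{\Psi}/L_{\min}) \rfloor + 1$ trials, at which point the line-search accepts.

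There is no real obstacle here; the only mildly delicate point is the linearization error bound in the $\ell_1$-norm, but this is standard and follows by applying the fundamental theorem of calculus componentwise together with the Lipschitz property of $\Psi'$ on $\Omega$ (with $L_{\Psi}$ absorbing any constant arising from norm equivalence between $\ell_1$ and $\ell_2$ in the range space $\mathbb{R}^n$).
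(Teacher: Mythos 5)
Your proposal is correct and follows essentially the same route as the paper: a triangle inequality on $\vert\Psi(\Vb_k)\vert$ against its linearization, the Lipschitz-based quadratic bound $\tfrac{L_{\Psi}}{2}\norms{\Vb_k-\xb^k}^2$ on the remainder, and the comparison $L \geq \beta L_{\Psi}$ to absorb the error into the regularization term. Your closing remark on the $\ell_1$-versus-$\ell_2$ norm of the remainder is a fair point of care that the paper glosses over, but it does not change the argument.
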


\begin{proof}
Since $\Psi'$ is $L_{\Psi}$-Lipschitz continuous, for any $\xb^k$ and $\Vb_L(\xb^k)$, we have 
\begin{equation*}
{\!\!\!\!\!}\begin{array}{ll}
\vert \Psi(\Vb_L(\xb^k))\vert {\!\!\!}&\leq \vert  \Psi(\xb^k) + \Psi'(\xb^k)(\Vb_L(\xb^k) - \xb^k)\vert  +  \Vert \Psi(\Vb_L(\xb^k)) - \Psi(\xb^k)   -  \Psi'(\xb^k)(\Vb_L(\xb^k) - \xb^k)\Vert \vspace{1ex}\\
&\leq \vert  \Psi(\xb^k) + \Psi'(\xb^k)(\Vb_L(\xb^k) - \xb^k)\vert  +  \frac{L_{\Psi}}{2}\Vert \Vb_L(\xb^k) - \xb^k\Vert^2.
\end{array}
\end{equation*}
Using the definition of $\mathcal{Q}_L(\Vb;\xb)$, we obtain
\begin{equation*}
\begin{array}{ll}
{\!\!\!}F(\Vb_L(\xb^k)) \leq \mathcal{Q}_L(\Vb_L(\xb^k)l\xb^k) - \frac{L - \beta L_{\Psi}}{2}\Vert \Vb_L(\xb^k) - \xb^k\Vert^2.
\end{array}{\!\!\!}
\end{equation*}
From this inequality, we can see that if $L\geq \beta L_{\Psi}$, then $F(\Vb_L(\xb^k)) \leq \mathcal{Q}_L(\Vb_L(\xb^k);\xb^k)$.
Hence, Step~\ref{step:A_1_GN_step} of Algorithm~\ref{alg:A1} terminates after a finite number of iterations.
\end{proof}

Let $\mathcal{L}_F(\alpha) =\{x \in\Omega ~\mid~ F(x) \leq \alpha \}$ be the level set of $F$ at $\alpha$.  
Now, we are ready to state the following theorem on global convergence of Algorithm~\ref{alg:A1}. 

\begin{theorem}\label{thm:main_theorem}
Let $\set{\xb^k}$ be the sequence generated by Algorithm~\ref{alg:A1}.
Then $\set{\xb^k}\subset \mathcal{L}_F(F(\xb^0))$ and 
\begin{equation}\label{eq:convergence_rate}
\min_{0\leq k \leq K}\Vert \Gb_{\beta L_{\Psi}}(\xb^k)\Vert^2 \leq \frac{2(\beta L_{\Psi})^2}{ L_{\min} ({K+1})} \left[ F(\xb^0) - F^{\star}\right],
\end{equation}
where $F^{\star} := \inf_{\xb\in\Omega} F(\xb) > -\infty$. Moreover, we also obtain
\begin{equation}\label{eq:thm31_est3}
\lim_{k\to\infty}\norms{\xb^{k+1}-\xb^k} = 0,~~\text{and}~~\lim_{k\to\infty}\Vert \Gb_{\beta L_{\Psi}}(\xb^k)\Vert = 0,
\end{equation}
and the set of limit points $\hat{\mathcal{S}}^{*}$ of the sequence $\{\xb^k\}_{k\geq 0}$ is connected.
If this sequence is bounded (in particular, if $\mathcal{L}_F(F(\xb^0))$ is bounded) then every limit point is a stationary point of \eqref{eq:NLP}.
Moreover, if the set of limit points $\hat{\mathcal{S}}^{*}$  is finite, then the sequence $\set{\xb^k}$ converges to a stationary point $\xb^{*} \in \mathcal{S}^{*}$ of \eqref{eq:NLP}.
If, in addition, $\xb^{\ast}$ is feasible to \eqref{eq:constr_prob} and $\beta$ is sufficiently large, then $\xb^{*}$ is also a stationary point of \eqref{eq:constr_prob}.
\end{theorem}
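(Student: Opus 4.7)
The plan is to chain together the descent estimate from Lemma~\ref{le:stationary_point} with the line-search guarantee of Lemma~\ref{le:well_define}, then use monotonicity properties of $\Gb_L$ and $r_L$ to pass from the iterate-dependent quantity $r_{L_k}(\xb^k)$ to the canonical $\Gb_{\beta L_{\Psi}}(\xb^k)$ appearing in the rate. The line-search condition at Step~\ref{step:A_1_GN_step} together with Lemma~\ref{le:stationary_point}(b) yields
\begin{equation*}
F(\xb^{k+1}) \leq \mathcal{Q}_{L_k}(\Vb_{L_k}(\xb^k);\xb^k) \leq F(\xb^k) - \tfrac{L_k}{2}r_{L_k}^2(\xb^k).
\end{equation*}
In particular $F$ is nonincreasing along the iterates, giving $\set{\xb^k}\subset\mathcal{L}_F(F(\xb^0))$.

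To establish the rate \eqref{eq:convergence_rate}, I would use two facts from Lemma~\ref{le:stationary_point}(b): $r_L(\xb^k)$ is nonincreasing in $L$, and $L_k \leq \beta L_{\Psi}$ by construction of the line-search range. Hence $r_{L_k}(\xb^k) \geq r_{\beta L_{\Psi}}(\xb^k) = \norms{\Gb_{\beta L_{\Psi}}(\xb^k)}/(\beta L_{\Psi})$. Combined with $L_k \geq L_{\min}$, the descent bound becomes
\begin{equation*}
F(\xb^k) - F(\xb^{k+1}) \geq \tfrac{L_{\min}}{2(\beta L_{\Psi})^2}\norms{\Gb_{\beta L_{\Psi}}(\xb^k)}^2.
\end{equation*}
Telescoping from $k=0$ to $K$ and bounding $F(\xb^{K+1}) \geq F^{\star}$ delivers \eqref{eq:convergence_rate}. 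Telescoping instead the cruder bound $F(\xb^k) - F(\xb^{k+1}) \geq \tfrac{L_{\min}}{2}\norms{\xb^{k+1}-\xb^k}^2$ proves $\sum_{k}\norms{\xb^{k+1}-\xb^k}^2 < \infty$, hence $\norms{\xb^{k+1}-\xb^k}\to 0$; the rate itself forces $\norms{\Gb_{\beta L_{\Psi}}(\xb^k)}\to 0$.

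The remaining qualitative statements follow from standard subsequential arguments. For boundedness of $\set{\xb^k}$, either invoke compactness of $\mathcal{L}_F(F(\xb^0))$ or assume the hypothesis; connectedness of the limit-point set $\hat{\mathcal{S}}^{*}$ is then classical (Ostrowski-type) from $\norms{\xb^{k+1}-\xb^k}\to 0$. Given a convergent subsequence $\xb^{k_j}\to\xb^{*}$, the fact that $\norms{\xb^{k_j+1}-\xb^{k_j}}\to 0$ gives $\Vb_{L_{k_j}}(\xb^{k_j})\to\xb^{*}$, and the subgradients $\xi(\xb^{k_j})\in\partial\vert\Psi(\xb^{k_j}) + \Psi'(\xb^{k_j})(\xb^{k_j+1}-\xb^{k_j})\vert$ are uniformly bounded (in the dual unit ball of the $\ell_1$-norm), so we may extract a further subsequence with $\xi(\xb^{k_j})\to\xi^{*}$. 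The main technical point, and the one I expect to be the main obstacle, is justifying $\xi^{*}\in\partial\vert\Psi(\xb^{*})\vert$: this uses outer semicontinuity of the subdifferential mapping together with the fact that the argument $\Psi(\xb^{k_j})+\Psi'(\xb^{k_j})(\xb^{k_j+1}-\xb^{k_j})$ converges to $\Psi(\xb^{*})$ by continuity of $\Psi,\Psi'$ and $\xb^{k_j+1}-\xb^{k_j}\to 0$. Passing to the limit in the optimality condition \eqref{eq:subprob1_b}, the term $L_{k_j}(\xb^{k_j+1}-\xb^{k_j})$ vanishes (since $L_{k_j}$ is bounded), yielding exactly \eqref{eq:FONC_new} at $\xb^{*}$. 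Finiteness of $\hat{\mathcal{S}}^{*}$ combined with $\norms{\xb^{k+1}-\xb^k}\to 0$ and connectedness forces $\hat{\mathcal{S}}^{*}$ to be a singleton, so the whole sequence converges. The final claim is then immediate from Proposition~\ref{pro:penalty_stationary}.
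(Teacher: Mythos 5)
Your proof is correct and follows essentially the same route as the paper: per-iteration sufficient decrease, monotonicity of $r_L$ in $L$ to pass to $\Gb_{\beta L_{\Psi}}$, telescoping for the rate \eqref{eq:convergence_rate} and the vanishing step sizes, a subsequential passage to the limit in the optimality condition \eqref{eq:subprob1_b} for stationarity of limit points, and an Ostrowski-type argument when the limit set is finite. If anything, your justification of the descent via the line-search condition combined with \eqref{eq:f_f_rho} is the cleaner one (the paper cites \eqref{eq:key_est1}, which on its own would require $2L_k \geq \beta L_{\Psi} + L_{\min}$), and your handling of the subgradient limit is more explicit than the paper's one-line remark.
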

\begin{proof}
From Step~\ref{step:A1_x_update} of Algorithm \ref{alg:A1}, we have $\xb^{k+1} := \Vb_{L_k} (\xb^k)$ and $\set{\xb^k}\subset\Omega$. 
Using \eqref{eq:key_est1}, it is easy to obtain  $-\infty < F^{\star} \leq F(\xb^{k+1}) \leq F(\xb^k) \leq \cdots \leq F(\xb^0)$.  
This shows that $\set{\xb^k}\subset\mathcal{L}_F(F(\xb^0))$, and $\set{F(\xb^k)}$ is a decreasing sequence and bounded.
Hence, it has at least a  convergent subsequence. Moreover, from \eqref{eq:key_est1}, we also have
\begin{equation}\label{eq:thm1_est1}
F(\xb^{k+1}) \leq F(\xb^k) - \frac{L_{\min}}{2} r_{L_k}^2(\xb^k)   \leq F(\xb^k) - \frac{L_{\min}}{2} r_{\beta L_{\Psi}}^2(\xb^k).
\end{equation}
Summing up the inequality \eqref{eq:thm1_est1} from $k=0$ to $k=K$ and using $F(\xb^{k+1}) \geq F^{\star}$, we obtain
\begin{equation*} 
\frac{L_{\min}}{ 2 (\beta L_{\Psi})^2 } \sum_{k=0}^K\Vert \Gb_{\beta L_{\Psi}}(\xb^k)\Vert^2  = \frac{L_{\min}}{2}\sum_{k=0}^K r^2_{\beta L_{\Psi}}(\xb^k)   \leq  F(\xb^0) - F(\xb^{k+1})  \leq   F(\xb^0) - F^{\star}.
\end{equation*}
This implies
\begin{equation*}
\min_{0\leq k\leq K}\Vert \Gb_{\beta L_{\Psi}}(\xb^k)\Vert^2 \leq \frac{2 (\beta L_{\Psi})^2}{L_{\min} (K+1)} \left[F(\xb^0) - F^{\star}\right],
\end{equation*}
which  leads to \eqref{eq:convergence_rate}. Similarly, for any $N \geq 0$ one has
\begin{equation}\label{eq:thm1_est2}
F(\xb^k) - F(\xb^{k+N}) \geq \frac{L_{\min}}{2}\sum_{i=k}^{k+N-1} r_{L_k}^2(\xb^i) \geq \frac{L_{\min}}{2}\sum_{i=k}^{k+N-1} r^2_{\beta L_{\Psi}}(\xb^i).
\end{equation}
Note that the sequence $\set{F(\xb^k)}_{k\geq 0}$ has a convergent subsequence, thus  passing to the limit as $k \to \infty$ in \eqref{eq:thm1_est2} we obtain the first limit of \eqref{eq:thm31_est3}.
Since $\norms{\xb^{k+1} - \xb^k} = r_{L_k}(\xb^k) \geq r_{\beta L_{\Psi}}(\xb^k) = \frac{1}{\beta L_{\Psi}}\norms{\Gb_{\beta L_{\Psi}}(\xb^k)}$ due to Statement (b) of Lemma~\ref{le:stationary_point}, the first limit of  \eqref{eq:thm31_est3} also implies the second one. If the sequence $\set{\xb^k}_{k\geq 0}$ is bounded, by passing to the limit through a subsequence and combining with Lemma \ref{le:stationary_point}, we easily prove that every limit point is a stationary point of \eqref{eq:NLP}.
If the set of limit points $\hat{\mathcal{S}}^{*}$ is finite{, by} applying the result in \cite{Ostrowski1966}[Chapt. 28], we obtain the proof of the remaining conclusion.
\end{proof}

%

Theorem~\ref{thm:main_theorem} provides a global convergence result for Algorithm~\ref{alg:A1}.  
Moreover, our algorithm requires solving convex subproblems at each iteration, thus offering a great advantage over classical penalty-type schemes.  
Since the underlying problem is non-convex, the iterates of our algorithm may get trapped at points that may be infeasible for the original problem.  
That is, under the stated conditions, the iterate sequence $\set{\xb^k}$ may converge to a  local minimum (stationary) point $\xb^{*}$ of \eqref{eq:NLP}. Since $\xb^{*} \in\Omega$, if $\Psi(\xb^{*}) = 0$, then $\xb^{*}$ is also a local minimum  (stationary) point of the original problem \eqref{eq:constr_prob}. 

We can {sometimes overcome} this by combining the algorithm with a \textit{run-and-inspect procedure} \cite{CheSun:19}, whereby if $\xb^{*}$ violates $\Psi(\xb) =0$, then we restart the algorithm at a new starting point.  More precisely, we  add an  \textit{inspect} phase to our  existing algorithm that helps escape from non-feasible stationary points. 
In the inspection phase, if  $\Psi(\xb^{*}) \not = 0$ we sample a  point  around the current point and increase the parameter $\beta$.  
{Since we do not know any optimal Lagrange multiplier $\yb^{*}$ of \eqref{eq:constr_prob}, we cannot guarantee that $\beta > \|\yb^{*}\|_{\infty}$.
However, since it is expected that the multiplier $\yb_k$ of the subproblem \eqref{eq:GN_dir} converges to $\yb^{*}$, we can use $\yb_k$ to monitor the update of $\beta$ by guaranteeing that $\beta > \vert\yb_k\vert_{\infty}$.}
We have seen that such strategy performs well on a set of  realistic non-convex  AC-OPF problems. Nevertheless, in this paper, we do not have theoretical guarantee for this variant and leave this extension for future work.


\subsection{Local convergence analysis}\label{subsec:local_convergence_rate}
Let us study a special case of \eqref{eq:NLP} where Algorithm~\ref{alg:A1} has a local quadratic convergence rate. 
Our result relies on the following assumptions. First, for simplicity of our analysis, we assume that $\beta > 0$ is fixed and $L_k$ is also fixed at $L_k := L > 0$ for $k\geq 0$ in Algorithm~\ref{alg:A1}. 
Next, let $\xb^{\ast}$ be a stationary point of  \eqref{eq:NLP} such that
\begin{equation}\label{eq:non_degenerate}
\iprods{\nabla{f}(\xb^{\ast}), \xb - \xb^{\ast}} + \beta\vert \Psi'(\xb^{\ast})(\xb - \xb^{\ast})\vert \geq {\omega_{\min}}\norms{\xb - \xb^{\ast}}, ~~~\forall \xb \in\mathcal{N}(\xb^{\ast})\cap\Omega,
\end{equation}
where  ${\omega_{\min}} > 0$ is a given constant independent of $x$ and $\mathcal{N}(\xb^{\ast})$ is a neighborhood of $\xb^{\ast}$. The condition \eqref{eq:non_degenerate} is rather technical, but it holds in the following case. Let  us assume that the Jacobian $\Psi'(\xb^{\ast})$ of $\Psi$ at $\xb^{\ast}$ satisfies the following condition 
\begin{equation*}
\norms{\Psi'(\xb^{\ast})(\xb - \xb^{\ast}} \geq \sigma_{\min}(\Psi'(\xb^{\ast}))\norms{\xb - \xb^{\ast}}~~~\forall~\xb \in\mathcal{N}(\xb^{\ast})\cap\Omega,
\end{equation*}
where $\sigma_{\min}(\Psi'(\xb^{\ast})$ is the positive smallest singular value of $\Psi'(\xb^{*})$.
This condition is similar to the strong second-order sufficient optimality condition  \cite{Nocedal2006}, but only limited to the linear objective function.
In this case, we have $\vert \Psi'(\xb^{\ast})(\xb - \xb^{\ast})\vert \geq \Vert \Psi'(\xb^{\ast})(\xb - \xb^{\ast})\Vert \geq \sigma_{\min}(\Psi'(\xb^{\ast}))\norms{\xb - \xb^{\ast}}$.
Therefore, it leads to
\begin{equation*}
\begin{array}{ll}
\iprods{\nabla{f}(\xb^{\ast}), \xb - \xb^{\ast}} + \beta\vert \Psi'(\xb^{\ast})(\xb - \xb^{\ast})\vert &\geq \left( \beta \sigma_{\min}(\Psi'(\xb^{\ast}))  - \norms{\nabla{f}(\xb^{\ast})}\right)\norms{\xb - \xb^{\ast}}.
\end{array}
\end{equation*}
For $\beta > 0$ sufficiently large such that $\beta > \frac{\norms{\nabla{f}(\xb^{\ast})}}{\sigma_{\min}(\Psi'(\xb^{\ast}))}$, we have ${\omega_{\min}} := \beta \sigma_{\min}(\Psi'(\xb^{\ast}))  - \norms{\nabla{f}(\xb^{\ast})} > 0$, and the condition \eqref{eq:non_degenerate} holds. Now, we prove a local quadratic convergence of Algorithm~\ref{alg:A1} under assumption~\eqref{eq:non_degenerate}.

Note that a fast local convergence rate such as superlinear or quadratic is usually expected in Gauss-Newton methods, see, e.g., \cite[Theorem 2.4.1.]{kelley1999iterative}.
The following theorem shows that Algorithm~\ref{alg:A1} can also achieve a fast local quadratic convergence rate under a more restrictive condition \eqref{eq:non_degenerate}.

\begin{theorem}\label{th:local_quadratic_convergence}
Let $\set{\xb_k}$ be the sequence generated by Algorithm~\ref{alg:A1} such that it converges to a feasible stationary point $\xb^{\ast}$ of \eqref{eq:NLP}.
Assume further that $\xb^{\ast}$ satisfies condition \eqref{eq:non_degenerate} for some ${\omega_{\min}} > 0$.
Then, if $\xb_k\in\mathcal{L}_F(F(\xb_0))$ such that $\norms{\xb_k - \xb^{\ast}}  \leq \frac{2{\omega_{\min}}}{L + (\sqrt{n}+5)\beta L_{\Psi}}$, then $\xb_{k+1} \in\mathcal{L}_F(F(\xb_0))$ and 
\begin{equation}\label{eq:local_key_est1}
\norms{\xb_{k+1} - \xb^{\ast}} \leq \left[\frac{L + (\sqrt{n}+3)\beta L_{\Psi}}{2({\omega_{\min}} - L_{\Psi}\norms{\xb_k - \xb^{\ast}})}\right]\norms{\xb_k - \xb^{\ast}}^2.
\end{equation}
As a consequence, the sequence $\set{\xb_k}$ locally converges to $\xb^{\ast}$ at a quadratic rate.
\end{theorem}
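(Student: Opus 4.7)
The plan is to exploit that $\xb^{k+1}=\Vb_L(\xb^k)$ minimizes the strongly convex subproblem $\mathcal{Q}_L(\cdot;\xb^k)$ over $\Omega$, so $\mathcal{Q}_L(\xb^{k+1};\xb^k)\le\mathcal{Q}_L(\xb^{\ast};\xb^k)$, and to combine this with convexity of $f$ and the non-degeneracy condition \eqref{eq:non_degenerate} in order to obtain a lower bound $\omega_{\min}\|\xb^{k+1}-\xb^{\ast}\|$ on the left. The right-hand side will then consist of Taylor-linearization errors in $\Psi$ (controlled by $\Psi(\xb^{\ast})=0$ together with the $L_\Psi$-Lipschitz continuity of $\Psi'$) and a proximal quadratic piece, both of which I expect to be massaged into a form $c_1\|\xb^k-\xb^{\ast}\|\|\xb^{k+1}-\xb^{\ast}\|+c_2\|\xb^k-\xb^{\ast}\|^2$.

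Concretely, let $A:=\Psi(\xb^k)+\Psi'(\xb^k)(\xb^{k+1}-\xb^k)$, $B:=\Psi(\xb^k)+\Psi'(\xb^k)(\xb^{\ast}-\xb^k)$, and $C:=\Psi'(\xb^{\ast})(\xb^{k+1}-\xb^{\ast})$. From the optimality comparison and $f(\xb^{k+1})-f(\xb^{\ast})\ge\iprods{\nabla f(\xb^{\ast}),\xb^{k+1}-\xb^{\ast}}$, adding $\beta|C|$ to both sides and using $|C|-|A|\le|C-A|$ on the right, condition \eqref{eq:non_degenerate} applied on the left yields the master inequality
\[
\omega_{\min}\|\xb^{k+1}-\xb^{\ast}\|\;\le\;\beta|B|+\beta|C-A|+\tfrac{L}{2}\bigl(\|\xb^k-\xb^{\ast}\|^2-\|\xb^{k+1}-\xb^k\|^2\bigr).
\]
Since $\Psi(\xb^{\ast})=0$, the second-order remainder gives $\|B\|_2\le\tfrac{L_\Psi}{2}\|\xb^k-\xb^{\ast}\|^2$, and the identity
\[
A-C=\bigl[\Psi(\xb^k)-\Psi(\xb^{\ast})-\Psi'(\xb^{\ast})(\xb^k-\xb^{\ast})\bigr]+\bigl[\Psi'(\xb^k)-\Psi'(\xb^{\ast})\bigr](\xb^{k+1}-\xb^k)
\]
yields $\|A-C\|_2\le\tfrac{L_\Psi}{2}\|\xb^k-\xb^{\ast}\|^2+L_\Psi\|\xb^k-\xb^{\ast}\|\|\xb^{k+1}-\xb^k\|$. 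Converting $\ell_1$ to $\ell_2$ (factor $\sqrt{n}$) and splitting $\|\xb^{k+1}-\xb^k\|\le\|\xb^{k+1}-\xb^{\ast}\|+\|\xb^k-\xb^{\ast}\|$ reduces $\beta(|B|+|C-A|)$ to a pure $\|\xb^k-\xb^{\ast}\|^2$ part plus a mixed $\|\xb^k-\xb^{\ast}\|\|\xb^{k+1}-\xb^{\ast}\|$ part.

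For the proximal piece I would use $\xb^{k+1}-\xb^k=(\xb^{k+1}-\xb^{\ast})+(\xb^{\ast}-\xb^k)$ to expand
\[
\tfrac{L}{2}(\|\xb^k-\xb^{\ast}\|^2-\|\xb^{k+1}-\xb^k\|^2)=-\tfrac{L}{2}\|\xb^{k+1}-\xb^{\ast}\|^2+L\iprods{\xb^{k+1}-\xb^{\ast},\xb^k-\xb^{\ast}},
\]
move the helpful negative $-\tfrac{L}{2}\|\xb^{k+1}-\xb^{\ast}\|^2$ to the left, and bound the inner product by $L\|\xb^{k+1}-\xb^{\ast}\|\|\xb^k-\xb^{\ast}\|$ via Cauchy--Schwarz. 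Collecting all contributions yields an inequality of the form $(\omega_{\min}-c_1\|\xb^k-\xb^{\ast}\|)\|\xb^{k+1}-\xb^{\ast}\|\le c_2\|\xb^k-\xb^{\ast}\|^2$ with $c_1,c_2$ affine in $L$ and $\beta L_\Psi$; the hypothesis $\|\xb^k-\xb^{\ast}\|\le 2\omega_{\min}/(L+(\sqrt{n}+5)\beta L_\Psi)$ makes the coefficient of $\|\xb^{k+1}-\xb^{\ast}\|$ strictly positive, so dividing through gives \eqref{eq:local_key_est1}. The inclusion $\xb^{k+1}\in\mathcal{L}_F(F(\xb^0))$ comes for free from the monotone-descent property $F(\xb^{k+1})\le F(\xb^k)$ already established in Theorem~\ref{thm:main_theorem}, and iterating \eqref{eq:local_key_est1} gives the quadratic rate since the prefactor $[L+(\sqrt{n}+3)\beta L_\Psi]/[2(\omega_{\min}-L_\Psi\|\xb^k-\xb^{\ast}\|)]$ remains bounded in the prescribed neighborhood.

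The main technical obstacle is the careful bookkeeping required to land on the exact constants $(\sqrt{n}+3)$ in the numerator and only $L_\Psi$ (not something of order $\sqrt{n}\beta L_\Psi+L$) multiplying $\|\xb^k-\xb^{\ast}\|$ in the denominator. This forces essentially all of the $L$ and $\sqrt{n}\beta L_\Psi$ factors into $c_2$, so the grouping of the mixed terms $\|\xb^k-\xb^{\ast}\|\|\xb^{k+1}-\xb^{\ast}\|$ (absorbed into $c_2$ rather than $c_1$) is delicate and likely requires a sharper reverse-triangle estimate than $|C|-|A|\le|C-A|$, or a carefully tuned application of Young's inequality. Retaining the negative $-\tfrac{L}{2}\|\xb^{k+1}-\xb^{\ast}\|^2$ as a buffer on the left is what keeps $L$ from polluting the denominator.
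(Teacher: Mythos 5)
Your proposal is correct and follows essentially the same route as the paper's own proof: compare $\mathcal{Q}_L(\xb_{k+1};\xb_k)$ against its value at $\xb^{\ast}$, bound the linearization remainders via $\Psi(\xb^{\ast})=0$ and the Lipschitz continuity of $\Psi'$, invoke convexity of $f$ together with \eqref{eq:non_degenerate}, and split $\norms{\xb_{k+1}-\xb_k}$ by the triangle inequality (your Young-inequality handling of the proximal term simply reproduces the paper's simpler step of discarding the nonnegative $\tfrac{L}{2}\norms{\xb_{k+1}-\xb_k}^2$). Your worry about landing on the exact constants is in fact a point against the paper rather than against you: the paper applies the $\sqrt{n}$ factor when converting $\Vert\cdot\Vert_2$ to $\vert\cdot\vert_1$ for the remainder at $\xb^{\ast}$ but silently drops it for the $A-C$ remainder, so a fully bookkept version of your argument yields the same quadratic rate with slightly larger constants.
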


\begin{proof}
Let  $d(\xb, \yb) := \Psi(\yb) - \Psi(\xb) - \Psi'(\xb)(\yb - \xb)$.
First, by the Lipschitz continuity of $\Psi'(\cdot)$, we have $\vert d(\xb, \yb) \vert \leq \sqrt{n}\Vert d(\xb, \yb)\Vert   \leq \frac{\sqrt{n}L_{\Psi}}{2}\norms{\yb - \xb}^2$.
In this case, from this estimate and \eqref{eq:subprob1_a}, for any $\xb, \yb\in \mathcal{L}_F(F(\xb_0))$, we can derive that 
\begin{equation*}
\begin{array}{ll}
\mathcal{Q}_L(\Vb_L(\xb); \xb) &:= \displaystyle\min_{\yb\in\Omega}\Big\{ \mathcal{Q}_L(\yb; \xb) := f(\yb) + \beta\vert\Psi(\xb) + \Psi'(\xb)(\yb - \xb)\vert + \frac{L}{2}\norms{\yb - \xb}^2\Big\} \vspace{1ex}\\
&= \displaystyle\min_{\yb\in\Omega}\Big\{ f(\yb) + \beta\vert \Psi(\yb) - d(\xb, \yb) \vert + \frac{L}{2}\norms{\yb - \xb}^2 \Big\} \vspace{1ex}\\
&\leq \displaystyle\min_{\yb\in\Omega}\Big\{ f(\yb) + \beta\vert \Psi(\yb))\vert + \beta\vert d(\xb, \yb) \vert + \frac{L}{2}\norms{\yb - \xb}^2 \Big\} \vspace{1ex}\\ 
&\leq \displaystyle\min_{\yb\in\Omega}\Big\{ f(\yb) + \beta\vert \Psi(\yb))\vert +  \frac{(L + \sqrt{n}\beta L_{\Psi})}{2}\norms{\yb - \xb}^2 \Big\} \vspace{1ex}\\
&\leq \displaystyle\min_{\yb\in\Omega}\Big\{ F(\yb)  +  \frac{(L + \sqrt{n}\beta L_{\Psi})}{2}\norms{\yb - \xb}^2 \Big\}.
\end{array}
\end{equation*}
This estimate together with $\Psi(\xb^{\ast}) = 0$ and $\xb = \xb_k$ imply that 
\begin{equation*} 
\mathcal{Q}_L(\xb_{k+1}; \xb_k) \equiv \mathcal{Q}_L(\Vb_L(\xb_k); \xb_k) \leq F(\xb^{\ast}) +  \frac{(L + \sqrt{n}\beta L_{\Psi})}{2}\norms{\xb_k - \xb^{\ast}}^2 = f^{\star}  +  \frac{(L + \sqrt{n}\beta L_{\Psi})}{2}\norms{\xb_k - \xb^{\ast}}^2.
\end{equation*}
Moreover, by $\mu_f$-convexity of $f$, we have $f(\xb_{k+1}) - f^{\star} \geq \iprods{\nabla{f}(\xb^{\ast}), \xb_{k+1} - \xb^{\ast}} + \frac{\mu_f}{2}\norms{\xb_{k+1} - \xb^{\ast}}^2$.
Using these last two estimates and the definition of $\mathcal{Q}_L$, we can show that
\begin{equation*}
\begin{array}{ll}
\frac{(L + \sqrt{n}\beta L_{\Psi})}{2}\norms{\xb_k - \xb^{\ast}}^2 &\geq f(\xb_{k+1}) - f^{\star} + \beta\vert \Psi(x_k) + \Psi'(\xb_k)(\xb_{k+1} - \xb_k)\vert + \frac{L}{2}\norms{\xb_{k+1} - \xb_k}^2 \vspace{1ex}\\
 &\geq   \frac{\mu_f}{2}\norms{\xb_{k+1} - \xb^{\ast}}^2 + \iprods{\nabla{f}(\xb^{\ast}), \xb_{k+1}) - \xb^{\ast}} + \beta\vert \Psi'(\xb_k)(\xb_{k+1} - \xb^{\ast}) \vspace{1ex}\\
 & + {~} \Psi(\xb_k) - \Psi(\xb^{\ast}) - \Psi'(\xb^{\ast})(\xb_k - \xb^{\ast})  + (\Psi'(\xb_k) - \Psi'(\xb^{\ast}))(\xb_{k+1} - \xb_k) \vert \vspace{1ex}\\
 &\geq   \frac{\mu_f}{2}\norms{\xb_{k+1} - \xb^{\ast}}^2 + \iprods{\nabla{f}(\xb^{\ast}), \xb_{k+1}) - \xb^{\ast}} + \beta\vert \Psi'(\xb^{\ast})(\xb_{k+1} - \xb^{\ast})\vert \vspace{1ex}\\
 & - \frac{\beta L_{\Psi}}{2}\Vert \xb_k - \xb^{\ast}\Vert^2 - \beta L_{\Psi}\norms{\xb_k - \xb^{\ast}}\norms{\xb_{k+1} - \xb_k}.
 \end{array}
\end{equation*}
Finally, using condition \eqref{eq:non_degenerate}, we obtain from the last estimate that
\begin{equation*}
\frac{(L + \sqrt{n}\beta L_{\Psi})}{2}\norms{\xb_k - \xb^{\ast}}^2  \geq \left({\omega_{\min}} - \beta L_{\Psi}\norms{\xb_k - \xb^{\ast}}\right)\norms{\xb_{k+1} - \xb^{\ast}} - \frac{3\beta L_{\Psi}}{2}\norms{\xb_k - \xb^{\ast}}^2.
\end{equation*}
Rearranging this inequality given that $\norms{\xb_k - \xb^{\ast}} < \frac{{\omega_{\min}}}{\beta L_{\Psi}}$, we obtain \eqref{eq:local_key_est1}. From \eqref{eq:local_key_est1}, we can see that if $\norms{\xb_k - \xb^{\ast}} \leq \frac{2{\omega_{\min}}}{L + (\sqrt{n} + 5)\beta L_{\Psi}}$, then $\norms{\xb_k - \xb^{\ast}} < \frac{{\omega_{\min}}}{\beta L_{\Psi}}$.
Moreover, $\norms{\xb_{k+1} - \xb^{\ast}} \leq \norms{\xb_k - \xb^{\ast}}$.
Hence, if $\xb_k \in\mathcal{L}_F(F(\xb_0))$, then $\xb_{k+1} \in\mathcal{L}_F(F(\xb_0))$.
The last statement is a direct consequence of  \eqref{eq:local_key_est1}.
\end{proof}

\beforesec 
\section{A Case Study: The Optimal Power Flow Problem}\label{sec:modeling}
\aftersec
	
In this section, we present the optimal power flow problem and its reformulation in a form that obeys the structure presented in the previous section. We then perform numerical experiments to validate the GN algorithm and we compare performance with IPOPT.

\subsection{Problem settings}

\paragraph{Original AC-OPF.} 
Consider a directed electric power network with a set of nodes ${\cal B}$ and a set of branches $\mathcal{L}$. The network consists of a set $\cal G$ of generators, with $\mathcal{G}_i$ denoting the set of generators at bus $i$. Denote $Y = G + jB$ as the system admittance matrix{, $G$ being the conductance and $B$ the susceptance ($j^2=-1$) \cite{Taylor2015}}. The decision variables of AC-OPF are the voltage magnitudes $\boldsymbol{v} \in \mathbb{R}^{|{\cal B}|}$, phase angles $\boldsymbol{\theta} \in  \mathbb{R}^{|{\cal B}|}$, and
the real and reactive power outputs of generators, which are denoted as $\boldsymbol{p} \in  \mathbb{R}^{|{\cal G}|}$ and $\boldsymbol{q} \in  \mathbb{R}^{|{\cal G}|}$. We will consider a fixed real and reactive power demand at every node $i$, which we denote as $P_i^d$ and $Q_i^d$, respectively. The constraints of the AC-OPF problem can be described by equations \eqref{eq:PBalance}-\eqref{eq:BoxCons}, see \cite{Kocuk2016SOCP}.
\begin{align}
& \sum_{j \in \mathcal{G}_i} p_j - P_i^d - G_{ii}v_i^2 - \sum_{(i,j)\in \mathcal{L}} v_iv_j \Big( G_{ij}  \cos(\theta_i-\theta_j) + B_{ij} \sin(\theta_i-\theta_j) \Big) \nonumber\\[-1mm]
&-\sum_{(j,i)\in \mathcal{L}} v_jv_i \Big( G_{ji}  \cos(\theta_j-\theta_i)+ B_{ji} \sin(\theta_j-\theta_i) \Big)=0 && \forall i\in \mathcal{B}, \label{eq:PBalance} \\
& \sum_{j \in \mathcal{G}_i} q_j - Q_i^d + B_{ii}v_i^2 - \sum_{(i,j)\in\mathcal{L}} v_iv_j \Big( G_{ij}  \sin(\theta_i-\theta_j) -  B_{ij} \cos(\theta_i-\theta_j) \Big) \nonumber\\[-1mm]
& - \sum_{(j,i)\in\mathcal{L}} v_jv_i \Big( G_{ji}  \sin(\theta_j-\theta_i)  -  B_{ji} \cos(\theta_j-\theta_i) \Big)=0  && \forall i\in\mathcal{B} \label{eq:QBalance},\\
& \left(-G_{ii}v_i^2 + G_{ij}v_iv_j\cos(\theta_i-\theta_j) + B_{ij}v_iv_j\sin(\theta_i-\theta_j)\right)^2\nonumber\\[-1mm]
& + \left(B_{ii}v_i^2 - B_{ij}v_iv_j\cos(\theta_i-\theta_j) + G_{ij}v_iv_j\sin(\theta_i-\theta_j)\right)^2 \leq S_{ij}^2 && \forall  (i,j) \in\mathcal{L}, \label{eq:LineLim1}\\[1mm]
& \left(-G_{jj}v_j^2 + G_{ji}v_jv_i\cos(\theta_j-\theta_i) + B_{ji}v_jv_i\sin(\theta_j-\theta_i)\right)^2 \nonumber\\
& + \left(B_{jj}v_j^2 - B_{ji}v_jv_i\cos(\theta_j-\theta_i) + G_{ji}v_jv_i\sin(\theta_j-\theta_i)\right)^2 \leq S_{ij}^2 && \forall  (i,j) \in\mathcal{L}, \label{eq:LineLim2} \vspace{1.25ex}\\
&\qquad \boldsymbol{\underline{V}} \leq \boldsymbol{v} \leq \boldsymbol{\overline{V}}, \qquad \boldsymbol{\underline{P}} \leq \boldsymbol{p} \leq \boldsymbol{\overline{P}}, \qquad \boldsymbol{\underline{Q}} \leq \boldsymbol{q} \leq \boldsymbol{\overline{Q}}, \qquad {\underline{\theta}_{ij}} \leq \theta_i-\theta_j \leq {\overline{\theta}_{ij}} && \forall (i,j)\in \mathcal{L}\label{eq:BoxCons}.
\end{align}
Constraints \eqref{eq:PBalance} and \eqref{eq:QBalance} correspond to the real and reactive power balance equations of node $i$. Constraints \eqref{eq:LineLim1} and \eqref{eq:LineLim2} impose complex power flow limits on each line, which we indicate by a parameter matrix $\boldsymbol{S}$. 
Constraints \eqref{eq:BoxCons} impose bounds on voltage magnitudes (indicated by parameter vectors $\boldsymbol{\underline{V}}$ and $\boldsymbol{\overline{V}}$), bounds on real power magnitudes (indicated by parameter vectors $\boldsymbol{\underline{P}}$ and $\boldsymbol{\overline{P}}$), bounds on reactive power magnitudes (indicated by parameter vectors $\boldsymbol{\underline{Q}}$ and $\boldsymbol{\overline{Q}}$), and  bounds on voltage phase angles differences (indicated by ${\underline{\theta}_{ij}}$ and ${\overline{\theta}_{ij}}$ for each line $(i,j) \in \mathcal{L}$). Note that the power balance equality constraints  \eqref{eq:PBalance}, \eqref{eq:QBalance} as well as the inequality constraints \eqref{eq:LineLim1}, \eqref{eq:LineLim2} are non-convex with respect to $\boldsymbol{v}$ and $\boldsymbol{\theta}$.



We will consider {the} objective of minimizing real power generation costs. Hence, we consider a convex quadratic objective function $f$:

\[ f(\boldsymbol{p}) =  \boldsymbol{p}^\top\text{diag}({\boldsymbol{C_2}})\boldsymbol{p} + \boldsymbol{C_1}^\top \boldsymbol{p}, \]

where $\boldsymbol{C_2} \geq 0$ and $\boldsymbol{C_1}$ are given coefficients of the cost function. The AC OPF problem then reads as the following non-convex optimization problem:
\begin{equation*}
{\cal P}_{opt}: \qquad   \min_{(\boldsymbol{V}, \boldsymbol{\theta}, \boldsymbol{p}, \boldsymbol{q})} f(\boldsymbol{p}) \quad \text{subject to} \;\;   \eqref{eq:PBalance}-\eqref{eq:BoxCons}.
\end{equation*}

\beforesubsec
\paragraph{Quadratic reformulation.}
The starting point of our proposed GN method for solving problem ${\cal P}_{opt}$ is the quadratic reformulation of AC-OPF \cite{Exposito1999}. In this reformulation, we use a new set of variables $c_{ij}$ and $s_{ij}$ for 
replacing the voltage magnitudes and angles {$v_i  \angle \theta_i = v_i e^{j\theta_i} = v_i(\cos(\theta_i)+j\sin(\theta_i))$}. These new variables are defined for all $i \in  {\cal B}$ and $(i,j) \in \mathcal{L}$ as:
\begin{align}
& c_{ii} = v_i^2, \;  c_{ij} = v_i v_j \cos(\theta_i-\theta_j) \label{eq:DefC},\\
& s_{ii} = 0, \; s_{ij} = -v_i v_j \sin(\theta_i-\theta_j) \label{eq:DefS},
\end{align}
where we will denote the vectors $\boldsymbol{c}$ and $\boldsymbol{s}$ as the collection of the $c_{ij}$ and $s_{ij}$ variables, respectively.

Assuming $\underline{{\theta}}_{ij} > -\frac{\pi}{2}$ and $\overline{{\theta}}_{ij} < \frac{\pi}{2}, \ \forall (i,j) \in \mathcal{L}$ (which is common practice), the mapping from $(\boldsymbol{v}, \boldsymbol{\theta})$ to $(\boldsymbol{c}, \boldsymbol{s})$ defined by \eqref{eq:DefC}, \eqref{eq:DefS} can be inverted as follows:
\begin{eqnarray}
& & v_i = \sqrt{c_{ii}}, \quad \theta_{i} - \theta_j = \arctan\left(- \frac{s_{ij}}{c_{ij}}\right) \nonumber,
\end{eqnarray}
thereby defining a bijection in $(\boldsymbol{c}, \boldsymbol{s})$ and $(\boldsymbol{v}, \boldsymbol{\theta})$.

The set of $(\boldsymbol{c}, \boldsymbol{s})$ and $(\boldsymbol{v}, \boldsymbol{\theta})$ that define this bijection is further equivalent to the following set of non-linear non-convex constraints in $(\boldsymbol{c}, \boldsymbol{s}, \boldsymbol{v}, \boldsymbol{\theta})$, see \cite{Kocuk2016SOCP}:
\vspace{-1ex}
\begin{align}
&c_{ij}^2  + s_{ij}^2 =  c_{ii} c_{jj}
&& \forall (i,j) \in \mathcal{L} \label{eq:QuadCons} \\
& \sin(\theta_i-\theta_j)c_{ij} + \cos(\theta_i-\theta_j)s_{ij} = 0 && \forall (i,j)\in\mathcal{L}\label{eq:TanCons} \\
& s_{ii}  = 0 && \forall i \in\mathcal{B}. \notag
\vspace{-1ex}
\end{align}
Now, we will substitute the voltage magnitude variables into the problem ${\cal P}_{opt}$, and consider the problem on the variables $(\boldsymbol{c}, \boldsymbol{s}, \boldsymbol{\theta})$. 
This reformulation has been commonly employed in the literature in order to arrive at an SOCP (Second-Order Cone Programming) relaxation of the problem \cite{Exposito1999, Jabr2008}. Through numerical experiments, we demonstrate that this reformulation results in highly effective starting points for our algorithm based on the SOCP relaxation of the AC-OPF. Moreover, the reformulation preserves the power balance constraints in linear form. Thus, the power balance constraints are not penalized in our scheme, which implies that they are respected at every iteration of the algorithm. For all these reasons, we pursue the quadratic reformulation of the present section, despite the fact that it requires the introduction of the new variables $\boldsymbol{c}$ and $\boldsymbol{s}$.

Concretely, the AC power balance constraints \eqref{eq:PBalance} and \eqref{eq:QBalance} are linear in $c_{ij}$ and $s_{ij}$: 
\begin{align}
&\sum_{j \in \mathcal{G}_i} p_j - P_i^d - G_{ii}c_{ii} - \sum_{(i,j) \in \mathcal{L}} (G_{ij}c_{ij} - B_{ij}s_{ij}) \
- \sum_{(j,i) \in \mathcal{L}} (G_{ji}c_{ji} - B_{ji}s_{ji})= 0 \qquad \forall i \in \mathcal{B} \label{eq:QuadPBalance},\\
&\sum_{j \in \mathcal{G}_i} q_j - Q_i^d + B_{ii}c_{ii} + \sum_{(i,j) \in \mathcal{L}} (B_{ij}c_{ij} + G_{ij}s_{ij})
 +\sum_{(j,i) \in \mathcal{L}} (B_{ji}c_{ji} + G_{ji}s_{ji}) =  0 \qquad \forall i \in  \mathcal{B} \label{eq:QuadQBalance},
\end{align}
Similarly, the power flow limit constraints \eqref{eq:LineLim1} and  \eqref{eq:LineLim2} are convex quadratic   in $c_{ij}$ and $s_{ij}$: 
\begin{align}
& (-G_{ii}c_{ii} + G_{ij}c_{ij} - B_{ij}s_{ij})^2
 + (B_{ii}c_{ii} - B_{ij}c_{ij} - G_{ij}s_{ij})^2 \leq S_{ij}^2 && \forall (i,j)\in \mathcal{L}, \label{eq:LineLimFrom}\\
&(-G_{jj}c_{jj} + G_{ji}c_{ij} + B_{ji}s_{ij})^2
+ (B_{jj}c_{jj} - B_{ji}c_{ij} + G_{ji}s_{ij})^2 \leq S_{ij}^2 && \forall (i,j)\in \mathcal{L}. \label{eq:LineLimTo}
\end{align}
The box constraints \eqref{eq:BoxCons} are reformulated as follows:
\begin{align}
&\underline{V}_i^2 \leq c_{ii} \leq \overline{V}_i^2 \quad \forall i\in \mathcal{B},
\ \boldsymbol{\underline{P}} \leq \boldsymbol{p} \leq \boldsymbol{\overline{P}}, \ \boldsymbol{\underline{Q}} \leq \boldsymbol{q} \leq \boldsymbol{\overline{Q}}, \ {\underline{\theta}_{ij}} \leq \theta_i-\theta_j \leq {\overline{\theta}_{ij}} \quad \forall (i,j)\in \mathcal{L}. \label{eq:QuadBoxCons}
\end{align}
As a result, an equivalent formulation for the AC-OPF model ${\cal P}_{opt}$ is:
\begin{equation}\label{eq:quad_form}
{\cal P}^{cs\theta}_{opt}: \   \min_{(\boldsymbol{p}, \boldsymbol{q},\boldsymbol{c},\boldsymbol{s},\boldsymbol{\theta})} f(\boldsymbol{p}) \quad \text{s.t.} \;\;   \eqref{eq:QuadCons}-\eqref{eq:QuadBoxCons}.
\end{equation}
With this reformulation, the {decision variables} are $\boldsymbol{x}=(\boldsymbol{p}, \boldsymbol{q}, \boldsymbol{c}, \boldsymbol{s}, \boldsymbol{\theta})$. 
Constraints \eqref{eq:QuadPBalance}-\eqref{eq:QuadBoxCons} define a convex set. 
%
We also have two non-convex equality constraints:
\begin{itemize}
\item Constraints \eqref{eq:QuadCons}: $\Psi^{ij}_q(\boldsymbol{c}, \boldsymbol{s}):=c_{ij}^2 + s_{ij}^2 - c_{ii} c_{jj} = 0, \ \forall (i,j) \in \mathcal{L}$. 
We will refer to them as \textit{quadratic} constraints. 

\item Constraints \eqref{eq:TanCons}: $\Psi^{ij}_t(\boldsymbol{c}, \boldsymbol{s},\boldsymbol{\theta}) := \sin(\theta_i-\theta_j)c_{ij} + \cos(\theta_i-\theta_j)s_{ij} = 0, \ \forall (i,j) \in \mathcal{L}$. 
We will refer to them as \textit{trigonometric} constraints. 
\end{itemize}

Now, AC-OPF is written in the format of \eqref{eq:constr_prob}, where $\boldsymbol{x} := (\boldsymbol{p},\boldsymbol{q},\boldsymbol{c},\boldsymbol{s},\boldsymbol{\theta})$, $f(\boldsymbol{x)} := f(\boldsymbol{p})$, $\Psi(\boldsymbol{x}):=( \Psi^{ij}_q(\boldsymbol{c},\boldsymbol{s}), \Psi^{ij}_t(\boldsymbol{c},\boldsymbol{s},\boldsymbol{\theta}))$ and:
\begin{align}\label{eq:Omega_set}
\begin{array}{ll}
\Omega :=& \{ \boldsymbol{x} \in \mathbb{R}^d \mid \boldsymbol{x} \mbox{ satisfies } \eqref{eq:QuadPBalance}-\eqref{eq:QuadBoxCons}\}
\end{array}
\end{align}

Moreover, since $\Psi$ is the collection of \eqref{eq:QuadCons} and \eqref{eq:TanCons}, we show in the next lemma  that it is differentiable and that its Jacobian is  Lipschitz continuous. 

\begin{lemma}\label{le:Lipschitz}
For the AC-OPF problem, $\Psi$ defined by \eqref{eq:QuadCons}--\eqref{eq:TanCons}, is smooth, and its Jacobian $\Psi'$ is Lipschitz continuous with a Lipschitz constant $L_{\Psi}$, i.e. $\Vert \Psi'(\xb) - \Psi'(\hat{\xb})\Vert \leq L_{\Psi}\Vert \xb - \hat{\xb}\Vert$ for all $\xb,\hat{\xb}\in\Omega$, where
\begin{equation}\label{eq:Lipschitz_const_of_Psi}
L_{\Psi} := \max\set{2, \big(1 + 2\max\{ \overline{V}_i^2 \mid i\in\mathcal{B} \}\big)^{1/2}} < +\infty.
\end{equation}
\end{lemma}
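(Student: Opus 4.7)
I would first note that smoothness is immediate: each quadratic row $\Psi_q^{ij}(\boldsymbol{c}, \boldsymbol{s}) = c_{ij}^2 + s_{ij}^2 - c_{ii}c_{jj}$ is a polynomial in $(\boldsymbol{c},\boldsymbol{s})$ and each trigonometric row $\Psi_t^{ij} = \sin(\theta_i-\theta_j)c_{ij} + \cos(\theta_i-\theta_j)s_{ij}$ is the product of a linear form in $(c_{ij}, s_{ij})$ with smooth sinusoids in $\theta_i-\theta_j$; hence $\Psi \in C^\infty$. To obtain a Lipschitz estimate for $\Psi'$, the strategy is to bound the spectral norm of the Hessian $\nabla^2 \Psi_k$ of each scalar row uniformly on $\Omega$ and then apply the mean-value inequality $\Vert \Psi'(\xb) - \Psi'(\hat{\xb})\Vert \leq \sup_{\xi\in[\xb,\hat{\xb}]} \Vert \Psi''(\xi)\Vert \cdot \Vert \xb - \hat{\xb}\Vert$.

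\textbf{Per-row Hessian bounds.} For a quadratic row, $\nabla^2\Psi_q^{ij}$ is a \emph{constant} matrix whose only nonzero entries, in the $(c_{ii}, c_{jj}, c_{ij}, s_{ij})$-block, are $-1$ at $(c_{ii}, c_{jj})$ (symmetrized) and $2$ on the diagonal at $(c_{ij}, c_{ij})$ and $(s_{ij}, s_{ij})$. Its spectrum is $\{-1, 1, 2, 2\}$, so its spectral norm equals $2$; this accounts for the first candidate in $L_\Psi$. For a trigonometric row, $\nabla^2 \Psi_t^{ij}$ couples $(c_{ij}, s_{ij})$ and $(\theta_i, \theta_j)$: the mixed partials in $(c,s)\times \theta$ are bounded in absolute value by $1$ (since $|\sin|,|\cos|\leq 1$), while the pure $\theta$-block involves $A := -\sin(\theta_i-\theta_j)c_{ij} - \cos(\theta_i-\theta_j)s_{ij}$, which by Cauchy--Schwarz satisfies $|A| \leq \sqrt{c_{ij}^2 + s_{ij}^2}$. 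On $\Omega$, combining the box constraint $c_{ii} \leq \overline{V}_i^2$ with the line-limit inequalities \eqref{eq:LineLimFrom}--\eqref{eq:LineLimTo} implies the uniform bound $|c_{ij}|, |s_{ij}| \leq V_{\max} := \max_i \overline{V}_i^2$ (the underlying reason being that at feasible points $c_{ij}^2 + s_{ij}^2 = c_{ii}c_{jj} \leq V_{\max}^2$). A direct spectral-norm calculation, which exploits that $\theta_i + \theta_j$ is a null direction of $\nabla^2 \Psi_t^{ij}$ since $\Psi_t^{ij}$ depends on $(\theta_i,\theta_j)$ only through $\theta_i-\theta_j$, then yields $\Vert \nabla^2 \Psi_t^{ij}\Vert \leq (1 + 2 V_{\max})^{1/2}$.

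\textbf{Assembly and main obstacle.} Taking the maximum of the two per-row bounds and invoking the mean-value inequality produces $L_\Psi = \max\{2, (1 + 2V_{\max})^{1/2}\}$, which is finite since $V_{\max}<+\infty$. The delicate step I anticipate is the passage from \emph{per-row} Hessian estimates to a global spectral-norm bound on the full Jacobian $\Psi'$: because every $c_{ii}$ and every $\theta_i$ is shared across many rows of $\Psi$ (one per incident line in $\mathcal{L}$), a naive Frobenius-type aggregation would incur an undesirable $\sqrt{n}$ factor. The fix should be to exploit the sparsity pattern of $\Psi''$ induced by the network graph, grouping the Hessian into block structures supported on the shared variables so that the global spectral norm is controlled by the per-row maximum rather than a sum. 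A secondary subtlety is justifying the uniform bound $|c_{ij}|, |s_{ij}| \leq V_{\max}$ on the whole of $\Omega$, since $\Omega$ does not explicitly contain the SOCP relaxation $c_{ij}^2 + s_{ij}^2 \leq c_{ii}c_{jj}$; this needs to be extracted from the quadratic line-limit inequalities and the box constraint on $c_{ii}$.
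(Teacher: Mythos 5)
Your overall route is the same as the paper's: split $\Psi$ into the quadratic rows \eqref{eq:QuadCons} and the trigonometric rows \eqref{eq:TanCons}, bound the Hessian of each scalar row, and convert that into a Lipschitz estimate for $\Psi'$ via the mean-value inequality. The quadratic part matches exactly (the paper also observes that $\nabla^2\Psi^q$ is constant with largest eigenvalue $2$). For the trigonometric part, however, the paper does \emph{not} perform the exact spectral computation you describe: it bounds the quadratic form by $\ub^{\top}\nabla^2\Psi^t\ub \leq (1+|c_{ij}|+|s_{ij}|)\Vert\ub\Vert^2$ and concludes $\lambda_{\max}(\nabla^2\Psi^t) \leq 1+2\max_i \overline{V}_i^2$ — \emph{without} a square root — and then states the constant \eqref{eq:Lipschitz_const_of_Psi} with a square root. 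Your claimed per-row bound $\Vert\nabla^2\Psi_t^{ij}\Vert \leq (1+2V_{\max})^{1/2}$ reproduces the stated formula but does not follow from the direct calculation you invoke: the exact spectral norm of that $3\times 3$ Hessian is $\tfrac{1}{2}\left(|A|+\sqrt{A^2+4}\right)$ with $|A|\leq\sqrt{c_{ij}^2+s_{ij}^2}$, and the inequality $\tfrac{1}{2}\left(|A|+\sqrt{A^2+4}\right)\leq\sqrt{1+2|A|}$ holds only for $|A|\leq 3/2$. This is satisfied for realistic per-unit voltage bounds but is not a general fact, so you should either state that restriction or fall back on the paper's (weaker, square-root-free) quadratic-form bound.

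The two ``delicate steps'' you flag are genuine, and you should know that the paper does not resolve either of them: it passes from the per-row eigenvalue bounds directly to the global constant by taking the row-wise maximum, with no argument about how the rows (which share the variables $c_{ii}$ and $\theta_i$ across all incident lines) aggregate into the operator norm of $\Psi'(\xb)-\Psi'(\hat{\xb})$; and it justifies $|c_{ij}|+|s_{ij}|\leq 2\max_i\overline{V}_i^2$ by citing \eqref{eq:BoxCons} alone, even though \eqref{eq:BoxCons} bounds only $v_i$ (equivalently $c_{ii}$) and the relation $c_{ij}^2+s_{ij}^2\leq c_{ii}c_{jj}$ is not among the constraints defining $\Omega$ — exactly the subtlety you raise about extracting boundedness of the off-diagonal $c_{ij},s_{ij}$ from \eqref{eq:LineLimFrom}--\eqref{eq:LineLimTo}. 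So your plan is at least as careful as the published argument, but the sparsity-based aggregation you propose is left as a sketch in your write-up and is absent from the paper; completing it rigorously (e.g., via a degree-dependent bound on the Jacobian-difference norm) would strengthen the lemma rather than merely match it.
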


The proof of Lemma \ref{le:Lipschitz} is shown in Appendix \ref{app:proof_lemma}. 

{We can now derive AC-OPF subproblems for GN. As a reminder, the subproblems are in the format of (8). Since (30), (31) and (34) are linear constraints and (32) and (33) are second-order cone constraints, $\Omega$, in the AC-OPF case, is a set of second-order cone constraints. By applying the classical substitution of the norm-1 terms in the objective, the objective of (8) is (convex) quadratic. We typically refer to an SOCP as an optimization problem with second-order cone constraints and a linear objective. As a consequence, the AC-OPF subproblem is a Quadratically Constrained Program (QCP).} 
We are now in a position to apply GN to AC-OPF.

\beforesubsec
\subsection{A practical implementation of the GN algorithm for AC-OPF}
In this section, the goal is to optimize the settings of the GN method. We will demonstrate that the choice of $\beta$ and $L$ is crucial. 
This will allow us to derive a practical version of the GN algorithm, which we compare to IPOPT. 
\paragraph{Stopping criteria.} 
We terminate Algorithm~\ref{alg:A1} in two occasions, which have been validated through experimental results:
\begin{itemize}
\item If the maximum number of iterations $k_{\max}:=100$ has been reached.
\item If the quadratic and trigonometric constraints are satisfied with a tolerance of $\epsilon_2$, where $\epsilon_2 :=1e^{-5}$.
Concretely, we stop Algorithm~\ref{alg:A1} if 
$$\max(\Vert\Psi_q(c^k,s^k)\Vert_\infty,\Vert\Psi_t(c^k,s^k,\theta^k)\Vert_\infty) \leq \epsilon_2.$$
\end{itemize}
If the difference $\Vert x^{k+1}-x^{k}\Vert_\infty < \epsilon_1$ ($\epsilon_1:=1e^{-6}$ in the numerical experiment), then Algorithm~\ref{alg:A1} has reached an approximate stationary point of the exact penalized formulation \eqref{eq:NLP} (see Lemma~\ref{le:stationary_point}(a)). In this case, the last iterate might not be feasible for $\mathcal{P}_{opt}^{cs\theta}$. We then use the run-and-inspect strategy \cite{CheSun:19}: the last iterate becomes the starting point of GN and $\beta$ is doubled. 

\paragraph{SOCP relaxation for initialization.}
As mentioned previously, the quadratic formulation is also used to derive the SOCP relaxation. In the SOCP relaxation, the angles $\boldsymbol{\theta}$ are not modeled. Consequently, the trigonometric constraints \eqref{eq:TanCons} are removed. Moreover, the non-convex constraints \eqref{eq:QuadCons} are relaxed:
\begin{align}
c_{ij}^2+s_{ij}^2 \leq c_{ii}c_{jj}, \quad \forall (i,j)\in\mathcal{L}.\label{eq:SOC}
\end{align}
Then, $\mathcal{P}_{socp}$ is defined such that:
\begin{equation*}
{\cal P}_{socp}: \   \min_{(\boldsymbol{p}, \boldsymbol{q},\boldsymbol{c},\boldsymbol{s})} f(\boldsymbol{p}) \quad \text{s.t.} \;\;   \eqref{eq:SOC}, \ \eqref{eq:QuadPBalance}-\eqref{eq:QuadBoxCons}.
\end{equation*}
Solving this relaxation will provide a partial initial point $(\boldsymbol{p}^0,\boldsymbol{q}^0,\boldsymbol{c}^0,\boldsymbol{s}^0)$. In order to improve this initial point, we also compute angles $\boldsymbol{\theta}^0$ by solving the following optimization problem, where the goal is to minimize the error on the trigonometric constraints:
\begin{alignat*}{4}
&\boldsymbol{\theta}^0=\arg\min_{\boldsymbol{\theta}} \ && \sum_{(i,j)\in\mathcal{L}}\left((\theta_i-\theta_j)-\mbox{arctan}(\frac{-s_{ij}^0}{c_{ij}^0})\right)^2
&\text{ subject to} \quad && {\underline{\theta}_{ij}} \leq \theta_i-\theta_j \leq {\overline{\theta}_{ij}} \quad \forall (i,j)\in \mathcal{L}.
\end{alignat*}

This initialization is used for the experimental results on the MATPOWER benchmark.

\paragraph{Parameter tuning strategies.}
The convergence theory presented above does not require tuning the $\beta$ and $L$ parameters. In practice, tuning is crucial for improving the performance of algorithms for constrained non-convex optimization, including Algorithm~\ref{alg:A1}. 
Several observations allow us to decrease the number of iterations that are required for convergence: 
(i) according to Proposition \ref{pro:penalty_stationary}, large values of $\beta$ ensure the equivalence between \eqref{eq:constr_prob} and \eqref{eq:NLP}; (ii) quadratic constraints and trigonometric constraints scale up differently; (iii) a careful updating of $L$ influences the number of times that subproblem \eqref{eq:subprob1_a} is solved.
These observations guide a detailed experimental investigation concerning the choices of $\beta$ and $L$ parameters, which is discussed in Appendix \ref{app:tuning_strategies}.

\paragraph{Accelerating the computation of subproblem solutions, and warmstart.}
The most challenging constraints in the subproblems of GN are the line limit constraints \eqref{eq:LineLimFrom} and \eqref{eq:LineLimTo}. All the constraints are linear except for \eqref{eq:LineLimFrom} and \eqref{eq:LineLimTo} which are quadratic convex. From experimental observations, it is very uncommon that the entire network is congested and that all line limits are binding. One way to benefit from this observation is the following: assuming $x^k$ is the $k^\text{th}$ iterate of the algorithm and $x^k$ is binding for a set $\mathcal{L}^k \subseteq \mathcal{L}$, a next guess $\hat{x}^{k+1}$ will first be computed by only enforcing \eqref{eq:LineLimFrom} and \eqref{eq:LineLimTo} for $(i,j) \in \mathcal{L}^k$. This approach of adding lazy network constraints to the model is applied both in theory \cite{Aravena2018} as well as in practice by system operators.

Moreover, we observe that the subproblems \eqref{eq:subprob1_a} are based on the same formulation, and only differ by slight changes of certain parameters along the iterations. 
This motivates us to warm-start the subproblem \eqref{eq:subprob1_a} with a previous primal-dual iterate. 
{In other words, we initialize the solver for solving the subproblem \eqref{eq:subprob1_a} at the $k$-th iteration at the final solution $\xb_{k-1}$ and its corresponding multiplier $\yb_{k-1}$ obtained from the previous iteration $k-1$.} 
Warm-starting is indeed a key step in iterative methods, including our GN scheme, and will be further analyzed  in Section \ref{sec:warm-start}.

\beforesubsec
\subsection{Numerical experiments}
In order to validate the proposed GN algorithm, our numerical experiments are conducted in 2 steps: first, we launch simulations on several test cases of a classical library (MATPOWER) and compare the GN algorithm with a state-of-the-art non-convex solver (IPOPT); second, we show the potential benefit of warm-start for our approach.

\subsubsection{Illustration on MATPOWER instances}
We use the MATPOWER \cite{ZimMur:11} library 
to have access to a wide range of AC-OPF test systems that have been investigated in the literature. We test our approach on instances whose size ranges between 1,354 and 25,000 nodes (\texttt{1354pegase} has 11,192 variables and 27,911 constraints while \texttt{ACTIVS25k} has 186,021 variables and 431,222 constraints). 

We benchmark our approach against IPOPT, a non-linear solver based on the interior-point method. To do so, we make use of \texttt{PowerModels.jl} \cite{Coffrin2018}, a Julia package that can be used to solve AC-OPF instances of different libraries with different formulations. In order to make a fair comparison, we initialize GN and IPOPT using the SOCP solution. 
{
Since the subproblem \eqref{eq:GN_dir} has a non-smooth convex objective due to the $\ell_1$-norm penalty, we reformulate it equivalently to a quadratically constrained program (QCP). 
We solve this QCP with Gurobi \cite{Gurobi}. Note that the time needed to solve the SOCP solution is not reported for two reasons. First, we use the SOCP solution for both GN and IPOPT as a starting point to make a fair comparison, so it will not change the gap between the execution times of the two methods. Second, computing the SOCP solution is negligible compared to the methods tested (always less than 5\%).}



The results of our analysis are presented in Table \ref{table:results}. In Table \ref{table:results}, from left to right, we report the name of the test case. In the Gauss-Newton part, we report the number of iterations, the number of times run-an-inspect is executed, the objective value{, the maximum constraint violation} and the execution time (in seconds)
. In the IPOPT part, we report the objective value{, the maximum constraint violation} and the execution time (in seconds). The last column provides the gap between the GN solution and the IPOPT solution ($\text{Gap} = \frac{\text{Obj}_\text{IPOPT}-\text{Obj}_\text{GN}}{\text{Obj}_\text{IPOPT}}$). 

\begin{table*}[htp!]
\caption{Comparison of the GN algorithm against IPOPT with SOCP initialization.}
\label{table:results}
\begin{center}
\newcommand{\cellb}[1]{\textbf{#1}}
\begin{tabular}{|l||c|c|c|c|c||c|c|c||c|c|c|}
\hline
& \multicolumn{5}{|c||}{Gauss-Newton} & \multicolumn{3}{|c||}{IPOPT} &\\
\hline
Test Case &\# It & \# RI & Objective & {MCV} & Time 
& Objective & {MCV} & Time & Gap\\
 \hline
\texttt{1354pegase} & 13 & 0 & $7.407e^4$ & {$4.0e^{-6}$} & 15.1 s
 & ${{7.407e^4}}$ & {$1.3e^{-7}$} & \textbf{{6.00}} s& 0.00 \%\\
\hline
\texttt{1888rte} & 14  & 0 & $5.981e^4$& {$8.0e^{-6}$}  & \textbf{25.9} s
 & ${\mathbf{5.980e^4}}$& {$1.4e^{-6}$}  & {59.0} s& 0.02 \%\\
\hline
\texttt{1951rte}& 4  & 0 & ${8.174e^4}$& {$6.9e^{-6}$}  & \textbf{6.94} s
 & ${8.174e^4}$& {$3.9e^{-6}$}  & {7.67} s  & 0.00 \%\\
\hline
\texttt{ACTIVSg2000}& 5 & 0 & ${1.229e^6}$& {$7.7e^{-6}$}  & \textbf{8.39} s
 &  ${1.229e^6}$& {$3.6e^{-7}$}  & {14.8} s& 0.01 \%\\
\hline
\texttt{2383wp} & 19  & 2 & $1.868e^6$ & {$1.7e^{-6}$} & 51.0 s
  & ${1.868e^6}$& {$1.1e^{-9}$}  & \textbf{{21.6}} s & 0.02 \% \\
\hline
\texttt{2736sp}& 4  & 0 & $\mathbf{1.307e^6}$& {$9.0e^{-6}$}  & \textbf{10.9} s
 & ${1.308e^6}$& {$8.5e^{-10}$}  & {13.3}  s& -0.06 \% \\
\hline
\texttt{2737sop}& 3 & 0 & $\mathbf{7.767e^5}$& {$9.2e^{-6}$}  & \textbf{7.77}  s
 &  ${7.778e^5}$& {$6.9e^{-10}$}  & 9.98 s& -0.14 \%\\
\hline
\texttt{2746wop}& 17 & 2 & $1.208e^6$& {$6.0e^{-6}$}  & 38.1 s
  & ${1.208e^6}$& {$2.6e^{-9}$}  & \textbf{12.8} s& 0.00 \%\\
\hline
\texttt{2746wp}& 3  & 0 & $\mathbf{1.631e^6}$& {$7.7e^{-6}$}  & \textbf{6.89} s
 &  ${1.632e^6}$& {$5.9e^{-9}$}  & {16.0} s& -0.05 \%\\
\hline
\texttt{2848rte}& 17 & 2 & ${5.303e^4}$& {$7.2e^{-6}$}  & \textbf{52.0} s
  & $\mathbf{5.302e^4}$& {$3.4e^{-7}$}  & {74.3} s & 0.01 \%\\
\hline
\texttt{2868rte} & 4 & 0 & ${7.980e^4}$& {$6.2e^{-6}$}  &\textbf{ {9.11}} s
  & $\mathbf{7.979e^4}$& {$3.9e^{-6}$}  & 29.3 s& 0.00 \%\\
\hline
\texttt{2869pegase} & 12 & 0 & $1.340e^5$& {$9.8e^{-6}$}  & 41.9 s
 & ${1.340e^5}$& {$2.5e^{-8}$}  & \textbf{{15.5} } s& 0.00 \% \\
\hline
\texttt{3012wp} & 8 & 0 & $2.593e^6$& {$2.2e^{-6}$}  & 25.2 s
  & $\mathbf{2.592e^6}$& {$9.4e^{-10}$}  & \textbf{{18.8}} s& 0.03 \%\\
\hline
\texttt{3120sp} & 13 & 0 & $\mathbf{2.142e^6}$& {$8.1e^{-6}$}  & {41.2} s
  & ${2.143e^6}$& {$7.3e^{-9}$}  & \textbf{19.2} s& -0.04 \%\\
\hline
\texttt{3375wp} & 8 & 0 & $7.413e^6$& {$8.7e^{-6}$}  &{34.7} s 
 & $\mathbf{7.412e^6}$& {$3.0e^{-8}$}  & \textbf{21.3} s & 0.02 \%\\
\hline
\texttt{6468rte} & 19 & 2 & ${8.685e^4}$& {$9.4e^{-6}$}  & 187 s
  & $\mathbf{8.683e^4}$& {$6.6e^{-7}$}   & \textbf{103}  s & 0.02 \% \\
\hline
\texttt{6470rte} & 11  & 0 & ${9.835e^4}$& {$4.9e^{-6}$}  & \textbf{89.3} s 
  & $9.835e^4$& {$3.8e^{-6}$}  & 144 s& 0.01 \%\\
\hline
\texttt{6495rte}& 12  & 0& ${1.063e^5}$& {$8.4e^{-6}$}  & {180}  s
 & ${1.063e^5}$& {$4.6e^{-8}$}  & \textbf{52.4} s & 0.04 \% \\
\hline
\texttt{6515rte}& 18  & 0 & ${1.098e^5}$& {$3.2e^{-6}$}  & 204 s 
 & ${1.098e^5}$& {$5.0e^{-7}$}  & \textbf{86.9} s& 0.03 \%\\
\hline
\texttt{9241pegase} & 17 & 0 & ${3.167e^6}$& {$9.2e^{-6}$}  & 894 s
 & $\mathbf{3.159e^6}$& {$1.2e^{-6}$}  & \textbf{368} s& 0.25 \%\\
\hline
\texttt{ACTIVSg10k}& 6  & 0& ${2.488e^6}$& {$6.6e^{-6}$}  & {117} s
 & $\mathbf{2.486e^6}$& {$6.8e^{-8}$}  & \textbf{93.4} s& 0.10 \%\\
\hline
\texttt{13659pegase}& 19 & 0 & $3.885e^5$& {$8.5e^{-6}$}  & \textbf{137} s
 & $\mathbf{3.861e^5}$& {$4.4e^{-7}$}  & 685 s & 0.62 \% \\
\hline
\texttt{ACTIVSg25k}& 16 & 0 & ${6.033e^6}$& {$9.9e^{-6}$}  & {1,740} s 
 & $\mathbf{6.018e^6}$& {$7.8e^{-8}$}  & \textbf{544} s & 0.25 \%\\
\hline
\end{tabular}
\end{center}
\vspace{-4ex}
\end{table*}

The first notable observation is that GN finds a stationary point {(i.e. feasible)} of the original AC-OPF problem for all 23 test cases. The stationary point obtained by GN attains the same objective function value as the one returned by IPOPT for most instances (a difference of 0.05\% in the objective value may be attributed to numerical precision) and the proposed method outperforms IPOPT in some instances (e.g. \texttt{2737sop}).

Our experiments further demonstrate that the GN method consistently requires a small number of iterations (less than 20) in a wide range of instances. 
This is critically important to further accelerate the performance of our method if we appropriately exploit warm-start strategies and efficient solvers for the strongly convex subproblem.

In terms of computational time, the performance is shared between the two approaches. Nevertheless, some instances reveal limitations of the GN algorithm, compared to IPOPT (\texttt{6495rte}, \texttt{9241pegase} and \texttt{ACTIVSg25k} for example): when the solution of a subproblem becomes time-consuming because of the size of the subproblem, GN might require a larger execution time. We use Gurobi for solving subproblem \eqref{eq:subprob1_a}, because it is one of the most stable QCP solvers that are available. 

Unfortunately, Gurobi (and IPMs for QCPs in general) does not support warm-start, which would have significantly decreased the computational time. One alternative is to use an Alternating Direction Method of Multipliers (ADMM) solver that supports warm-start. {There is no additional cost in implementing warm-start. Indeed, we simply use the solution of the previous iteration as a starting point for the next one.} However, ADMM solvers are not mature enough to test large-scale problems. Implementing an efficient subsolver is out of the scope of this work, however we are able to analyze the effect of warm start on these solvers (and by extension to our proposed algorithm), which is the subject of the next section. 




\subsubsection{The effect of warm-starting strategy}
\label{sec:warm-start}
%

We consider using OSQP \cite{OSQP} as an ADMM solver. Since OSQP only solves quadratic programs (QPs), and since we are only aiming at illustrating the potential of warm-start, we will drop the line limit constraints \eqref{eq:LineLimFrom} and \eqref{eq:LineLimTo}, in order to satisfy the requirements of the solver. We consider small test cases (\texttt{39\_epri} and  \texttt{118\_ieee}) since we observed numerical instability for larger test cases.

The results are presented in Table \ref{tab:warmstart}. From left to right, 
\# It reports the number of GN iterations, \# ADMM {It} reports the total number of ADMM iterations {performed} during the GN execution, and Time reports the sum of OSQP solve times along the iterations in seconds.

\begin{table*}[htp!]
\caption{Results without and with warm-start on 2 small instances using OSQP solver.
}
\label{tab:warmstart}
\begin{center}
\newcommand{\cellb}[1]{\textbf{#1}}
\begin{tabular}{|l||c|c|c||c|c|c|c|c|c|c|c|}
\hline
& \multicolumn{3}{|c||}{No warm-start} & \multicolumn{3}{|c|}{With warm-start} \\
\hline
Test Case &\# It & \# ADMM It & Time & \# It & \# ADMM It & Time\\
 \hline
\texttt{39\_epri} & 4 & 114,735 & 4.65 s & 4 & 56,988 & 2.15 s\\
\hline
\texttt{118\_ieee} & 4  & 162,251  & 29.2 s & 4 & 65,832 & 11.6 s\\
\hline
\end{tabular}
\end{center}
\vspace{-2ex}
\end{table*}

For  both cases, we observe that warm start divides the total number of ADMM iterations as well as solve time by more than a factor of 2, and almost by a factor of 3 for \texttt{118\_ieee}.

We also examine each GN iteration individually, and highlight the impact of warm-start on the number of ADMM iterations in Figure \ref{fig:WSEvolution}. Note that we warm-start dual and primal variables only after iteration 1. Warm-start decreases substantially the number of ADMM iterations in two cases:
\begin{enumerate}
\item When $L$ is updated. Indeed, updating $L$ only results in slightly changing the objective function. One expects the previous iterates to provide a good warm-start. This is confirmed in Figure \ref{fig:WSEvolution}, where we observe that the number of ADMM iterations is divided by at least a factor of 2 every time $L$ is updated.
\item When the last iterates are computed. Intuitively, one does not expects iterates to change substantially when approaching the optimal solution. This intuition is confirmed by Figure \ref{fig:WSEvolution}. For the particular case of the last iterate, for \texttt{39\_epri} (resp. \texttt{118\_ieee}), the required number of ADMM iterations is less than 20\% (resp. 30\%).
\end{enumerate}
This investigation suggests that, with a mature ADMM solver, warm-starting is a promising feature for improving the performance of GN on large test cases.

\begin{figure*}[htp!]
\centering
\begin{minipage}{0.5\textwidth}
  \includegraphics[width=8cm,height=5.25cm]{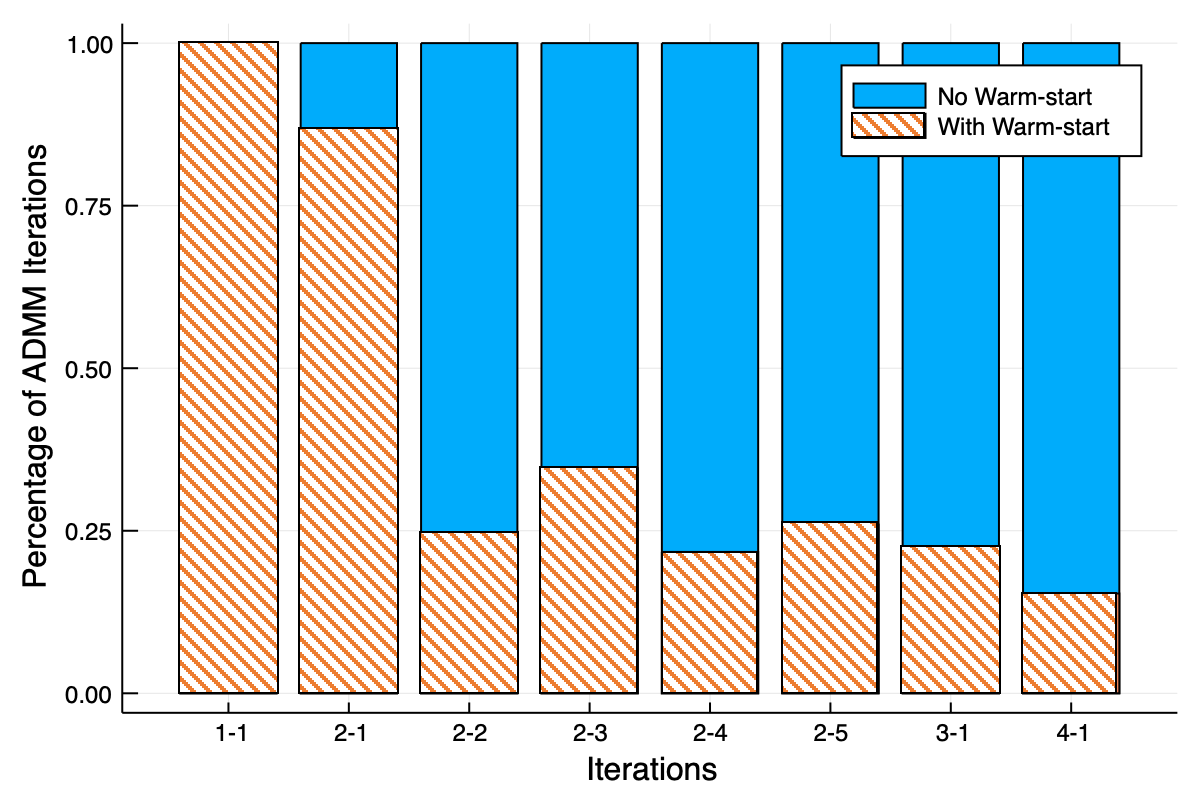}
\end{minipage}%
\begin{minipage}{.5\textwidth}
\includegraphics[width=8cm,height=5.25cm]{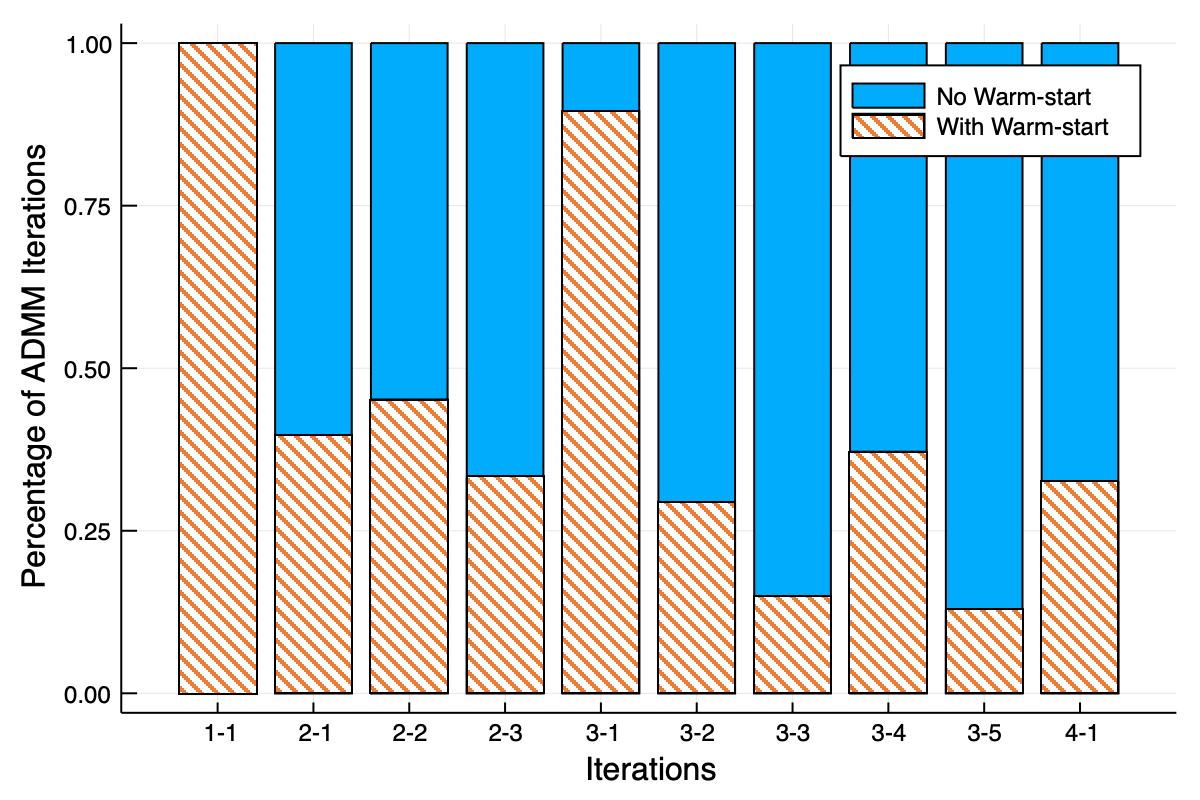}
\end{minipage}%
\caption{Evolution of the percentage of ADMM iterations along the iterations for \texttt{39\_epri} (left) and \texttt{118\_ieee} (right). `No Warm-start' always implies 100\% and the percentage of ADMM iterations `With Warm-start' is measured relatively to `No Warm-start'. GN iterations are shown on the $x$ axis in an $a-b$ format: $a$ is the actual GN iteration and $b$ represents the $b$th subproblem that had to be solved at iteration $a$ because of  an update of $L$.}
\label{fig:WSEvolution}
\vspace{-2ex}
\end{figure*}

\beforesubsec
{
\subsection{Optimization with bilinear matrix inequality constraint}\label{subsec:bmi_opt}
The goal of this subsection is to demonstrate that Algorithm~\ref{alg:A1} can solve a more general class of problems than classical methods such as IPMs or SQPs.
For this purpose, we consider the following optimization problem involving bilinear matrix inequality (BMI) constraints:
\begin{equation}\label{eq:bim_opt}
\left\{\begin{array}{ll}
\displaystyle\max_{t, F, P} & t \vspace{1ex}\\
\text{subject to}~&(A + BFC)^{\top}P + P(A + BFC) + 2t P \preceq 0, \ P \succeq 0, \ P = P^{\top},
\end{array}\right.
\end{equation}
%
where $t \in\mathbb{R}$, $F \in \mathbb{R}^{n_u\times n_y}$ and $P\in\mathbb{R}^{n\times n}$ are optimization variables, $A\in\mathbb{R}^{n\times n}$, $B\in\mathbb{R}^{n\times n_u}$, and $C\in\mathbb{R}^{n_y\times n}$ are given input matrices, and $(\cdot) \preceq 0$ (resp. $\succeq 0$) means that $(\cdot)$ is negative semidefinite (resp., positive semidefinite).
This problem arises from controller design, and is known as the spectral abscissa problem, see, e.g., \cite{Burke2002}.
\newline
If we introduce $S = -(A + BFC)^{\top}P + P(A + BFC) - 2t P$, $\xb = (S, P, F, t)$, and $\Omega := \big\{ \xb := (S, P, F, t) \mid S = S^{\top}, \ P = P^{\top}, \ S \succeq 0, \ P \succeq 0 \big\}$, then we can reformulate \eqref{eq:bim_opt} into \eqref{eq:constr_prob} with $f(\xb) := t$ and $g(\xb) := (A + BFC)^{\top}P + P(A + BFC) + 2t P + S = 0$.
Clearly, if we explicitly write $\Omega$ in the following form 
\begin{equation*}
\Omega = \big\{  (S, P, F, t) \mid S = S^{\top}, \ P = P^{\top}, \ \lambda_{\min}(S) \geq 0, \ \lambda_{\min}(P) \geq 0 \big\},
\end{equation*}
then we obtain a nonsmooth problem, where $\lambda_{\min}(\cdot)$ is the smallest eigenvalue of $(\cdot)$, which is nonsmooth \cite{Burke2002}.
In this case, IPMs and SQPs are not applicable for solving \eqref{eq:bim_opt}.
However, Algorithm~\ref{alg:A1} can still solve \eqref{eq:bim_opt} and its theoretical guarantees are preserved. 
\newline
We test our method by using the following data:
$$A=\begin{bmatrix}
-2.45 & -0.90 & +1.53 & -1.26& +1.76\\
   -0.12 &-0.44 &-0.01 &+0.69 &+0.90\\
     +2.07& -1.20& -1.14& +2.04& -0.76\\
   -0.59 &+0.07& +2.91& -4.63 &-1.15\\
   -0.74 &-0.23 &-1.19& -0.06 &-2.52
\end{bmatrix}
\quad B=\begin{bmatrix}
+0.81& -0.79& +0.00& +0.00& -0.95\\
   -0.34 &-0.50& +0.06 &+0.22 &+0.92\\
   -1.32 &+1.55 &-1.22& -0.77& -1.14\\
   -2.11 &+0.32 &+0.00 &-0.83& +0.59\\
   +0.31 &-0.19 &-1.09& +0.00& +0.00
\end{bmatrix}   $$
$$C=\begin{bmatrix}
+0.00 &+0.00& +0.16& +0.00& -1.78\\
   +1.23 &-0.38 &+0.75 &-0.38 &+0.00\\
   +0.46 &+0.00& -0.05& +0.00& +0.00\\
   +0.00 &-0.12 &+0.23 &-0.12 &+1.14
   \end{bmatrix}$$
As starting point, we use:
$$t^0=\lambda_{\min}((A+A^\top)/2), \ P^0 = S^0 = I_5, \ F^0 = 0_{5,4},$$
and we choose $\beta= 1,000, L_0=1$.
These choices are motivated by the fact that a large value of $\beta$ ensures convergence, see Proposition \ref{pro:penalty_stationary}. We apply the same update strategy for $L$ as in the AC-OPF implementation.\\
Our algorithm converges in 14 iterations to a feasible local optimum. The constraints are satisfied with a tolerance of $1e^{-6}$.
}

\aftersec

\beforesec 
\section{Conclusion}
\aftersec

We propose a novel Gauss-Newton algorithm for solving a general  class of optimization problems with non-convex constraints.  We utilize an exact non-smooth penalty reformulation of the original problem and suggest an iterative scheme for solving this penalized problem which relies on the non-squared Gauss-Newton method. The subproblems of our proposed scheme are strongly convex programs, which can be efficiently solved by numerous third-party convex optimization solvers. We establish a best-known global  convergence rate for our method to a stationary point. Under more restrictive conditions, we derive a local quadratic convergence rate for our scheme.

We apply our proposed approach to solve the optimal power flow problem, which is a fundamental and ubiquitous problem in power systems engineering. We apply our proposed algorithm to a reformulation of the AC-OPF, and we propose numerous strategies for tuning our proposed Gauss-Newton scheme, initializing the algorithm, and warm-starting the resolution of the subproblems that are treated by our proposed method. We perform extensive numerical experiments on a large set of instances from the MATPOWER library, and demonstrate the competitive performance of our method to IPOPT, which is a state of the art non-linear non-convex solver. This analysis validates the theoretical analysis of our proposed GN scheme, and proves its effectiveness in practical applications. Future work aims at broadening the scope of such applications beyond power systems.

\section*{Acknowledgments}
I. Mezghani and A. Papavasiliou acknowledge the financial support of ENGIE-Electrabel. I. Necoara would like to acknowledge the support from  the Executive Agency for Higher Education, Research and Innovation Funding (UEFISCDI), Romania,  PNIII-P4-PCE-2016-0731, project ScaleFreeNet, no. 39/2017.
The work of Quoc Tran-Dinh was partly supported by NSF (DMS-1619884).

\bibliographystyle{apalike}
\bibliography{all_refs}

\clearpage

\section*{Appendix}
\appendix
This appendix provides the full proof of technical results in the main text, the detailed implementation of our algorithm, and additional numerical experiments.

\section{The proof of Lemma \ref{le:Lipschitz}}
\label{app:proof_lemma}
\begin{proof}
From the definition of $\Psi$, it consists of two parts: quadratic forms in $(c_{ij}, s_{ij})$ and trigonometric and linear forms in $\theta_{ij} := \theta_i - \theta_j$ and $(c_{ij}, s_{ij})$, respectively.
We can write it as $\Psi = [\Psi^q, \Psi^t]$.
Each function in $\Psi^q$ has the form $c_{ij}^2 + s_{ij}^2 - c_{ii}c_{jj}$, as shown by \eqref{eq:QuadCons}, and each function in $\Psi^t$ has the form $\sin(\theta_{ij})c_{ij} + \cos(\theta_{ij})s_{ij}$, as shown in \eqref{eq:TanCons}.
We can show that the second derivative of each component of $\Psi^q$ w.r.t. $(c_{ii}, c_{jj}, c_{ij}, s_{ij})$ and of $\Psi^t$ w.r.t. $(c_{ij}, s_{ij}, \theta_{ij})$, respectively is
\begin{align*}
&\nabla^2\Psi^q(\xb) = \begin{bmatrix}0 & -1 & 0 & 0 \\ -1 & 0 & 0 & 0 \\ 0 & 0 & 2 & 0\\ 0 & 0 & 0 & 2\end{bmatrix},~~~\text{and}\\
&\nabla^2\Psi^t(\xb) = \small{\begin{bmatrix}0 & 0 & \cos(\theta_{ij}) \\ 0 & 0 & -\sin(\theta_{ij}) \\ \cos(\theta_{ij}) & - \sin(\theta_{ij}) & -c_{ij}\sin(\theta_{ij}) - s_{ij}\cos(\theta_{ij})\end{bmatrix}}.
\end{align*}
The second derivative $\nabla^2\Psi^q(\xb)$ is constant.
Hence, the maximum eigenvalue of $\nabla^2\Psi^q(\xb)$ is \\$\lambda_{\max}(\nabla^2\Psi^q(\xb)) = 2$.
For any $\ub := [u_1, u_2, u_3]\in\mathbb{R}^3$, we can easily estimate that
\begin{equation*}
\begin{array}{ll}
\ub^{\top}\nabla^2{\Psi}^t(\xb)\ub
&= u_1u_3[- \sin(\theta_{ij}) + \cos(\theta_{ij})] + u_2u_3[\cos(\theta_{ij})\\ &\quad - \sin(\theta_{ij})] + [-c_{ij}\sin(\theta_{ij}) - s_{ij}\cos(\theta_{ij})]u_3^2 \vspace{1ex}\\
&\leq \frac{1}{2}(u_1^2 + u_3^2) + \frac{1}{2}(u_2^2 + u_3^2) + (|c_{ij}| + |s_{ij}|)u_3^2 \vspace{1ex}\\
&\leq (1 + |c_{ij}| + |s_{ij}|)( u_1^2 + u_2^2 + u_3^2) \vspace{1ex}\\
&= (1 + |c_{ij}| + |s_{ij}|)\Vert u\Vert^2.
\end{array}
\end{equation*}
Therefore, $\lambda_{\max}\left( \nabla^2\Psi^t(\xb)\right) =  1 + \vert c_{ij}\vert + \vert s_{ij}\vert \leq  1 + 2\max\set{V^2_i \mid i \in\mathcal{B}}$, where the last inequality follows from \eqref{eq:BoxCons}.
Consequently,  
\begin{equation*}
L_{\Psi} := \max\set{2, \big(1 + 2\max\{ \overline{V}_i^2 \mid i\in\mathcal{B} \}\big)^{1/2}} < +\infty,
\end{equation*}
which is \eqref{eq:Lipschitz_const_of_Psi}.
\end{proof}

\section{Investigation on parameter tuning strategies}
\label{app:tuning_strategies}
In order to explain how the parameters of the GN algorithm should be tuned, we first express the subproblem at iteration $k$ for the special case of AC-OPF.
\paragraph{Subproblem \eqref{eq:subprob1_a}  in the context of AC-OPF.}
We define the following functions in order to keep our notation compact: 
\begin{align*}
&\Phi_{q}(\boldsymbol{c},\boldsymbol{s},\boldsymbol{dc},\boldsymbol{ds})=\sum_{(i,j)\in\mathcal{L}}\bigl\lvert \Psi_q^{ij}(\boldsymbol{c},\boldsymbol{s}) + {\Psi_q^{ij}}'(\boldsymbol{c},\boldsymbol{s})(\boldsymbol{dc},\boldsymbol{ds})^\top \bigr\rvert,\\[5pt]
&\Phi_{t}(\boldsymbol{c},\boldsymbol{s},\boldsymbol{\theta},\boldsymbol{dc},\boldsymbol{ds},\boldsymbol{d\theta}) = \sum_{(i,j)\in\mathcal{L}}\bigl\lvert \Psi_t^{ij}(\boldsymbol{c},\boldsymbol{s},\boldsymbol{\theta})+ {\Psi_t^{ij}}'(\boldsymbol{c},\boldsymbol{s},\boldsymbol{\theta})(\boldsymbol{dc},\boldsymbol{ds},\boldsymbol{d\theta})^\top\bigr\rvert.
\end{align*}
Note that we are facing two types of constraints (\textit{quadratic} and \textit{trigonometric}) and that they may attain different relative scales for different instances, as we have observed in our numerical experiments. 
We will define different penalty terms depending on the type of constraint, which will increase the flexibility of our implementation, while respecting the theoretical assumptions that we present in the main body of the paper. 
To this end, we denote by $\beta_q$ (resp. $\beta_t$) the $\beta$ penalty parameter associated with the quadratic (resp. trigonometric) constraints. The trigonometric constraints depend on $\boldsymbol{c}$, $\boldsymbol{s}$ and $\boldsymbol{\theta}$, whereas the quadratic ones only depend on $\boldsymbol{c}$ and $\boldsymbol{s}$. Therefore, we define two separate regularization parameters: $L_{cs}$ and $L_\theta$ (we drop the $k$ index from now on for notational simplicity).

The GN subproblem at iteration $k$, corresponding to \eqref{eq:GN_dir}, is  convex and has the form:
\vspace{-0.5ex}
\begin{alignat*}{3}
\mathcal{P}^k_{sub}: \quad & \min_{(\boldsymbol{y},\boldsymbol{d})} \quad && f(\boldsymbol{p}) + \beta_q\Phi_{q}(\boldsymbol{c}^k,\boldsymbol{s}^k,\boldsymbol{dc},\boldsymbol{ds})
 + \beta_t\Phi_{t}(\boldsymbol{c}^k,\boldsymbol{s}^k,\boldsymbol{\theta}^k,\boldsymbol{dc},\boldsymbol{ds},\boldsymbol{d\theta})\\&&&
  +\frac{L_{cs}}{2}\Vert (\boldsymbol{dc},\boldsymbol{ds})\Vert^2+\frac{L_{\theta}}{2}\Vert \boldsymbol{d\theta}\Vert^2\nonumber\\
& s.t. &&\boldsymbol{y}=(\boldsymbol{p}, \boldsymbol{q}), \ \boldsymbol{d}=(\boldsymbol{dc}, \boldsymbol{ds}, \boldsymbol{d\theta})\\
&&& (\boldsymbol{p},\boldsymbol{q},\boldsymbol{c}^k+\boldsymbol{dc},\boldsymbol{s}^k+\boldsymbol{ds},\boldsymbol{\theta}^k+\boldsymbol{d\theta}) \in \Omega.
\end{alignat*}
The optimal solution of $\mathcal{P}^k_{sub}$, which we denote by $(\boldsymbol{y}^\star,\boldsymbol{d}^\star)$, provides the next iterate $\boldsymbol{x}^{k+1}=(\boldsymbol{p}^\star,\boldsymbol{q}^\star, \boldsymbol{c}^k+\boldsymbol{dc}^\star, \boldsymbol{s}^k+\boldsymbol{ds}^\star, \boldsymbol{\theta}^k+\boldsymbol{d\theta}^\star)$.

\beforesubsec
\subsection{Joint values of parameters: $\beta_q=\beta_t$ and $L_{cs}=L_\theta$}
\aftersubsec
In our first set of tests, we implement the basic variant of Algorithm~\ref{alg:A1} by retaining the configuration of parameters from our theoretical results. 
Since we only aim at validating the algorithm, we focus on the three instances that have less than 2,000 nodes: \texttt{1354pegase}, \texttt{1888rte}, and \texttt{1951pegase}. 

For the penalty parameters, we first choose the same value $\beta$ for both quadratic and  trigonometric constraints as $\beta=\beta^q=\beta^t$.
We perform tests with three choices of $\beta$, based on the number of lines in the test case: $\beta=|\mathcal{L}|/100, |\mathcal{L}|/10$, or $|\mathcal{L}|$.
For the regularization parameter $L$, we also choose the same value $L =L_{cs}=L_\theta$. Note that all the experiments are initialized from the SOCP solution.
We consider three different strategies when applying Algorithm \ref{alg:A1}:
\begin{itemize}
\item \textit{Fixed strategy}: we fix $L$ at the upper bound $L_{\psi}$, which is computed in \eqref{eq:Lipschitz_const_of_Psi}.
\item \textit{Bisection update}: At each iteration $k$ of Algorithm~\ref{alg:A1}, we choose $L_{\min} = 1$ and initialize $L:=1$. If we do not satisfy the line-search condition $F(V_{L}(\boldsymbol{x}^k)) \leq \mathcal{Q}_{L}(V_{L}(\boldsymbol{x}^k); \boldsymbol{x}^k)$, we apply a bisection in the interval $[L_{\min},\beta L_{\psi}]$ until we satisfy this condition.
\item \textit{Geometric-$\mu$ update}: 
At each iteration $k$, we also choose $L_{\min} = 1$ and initialize $L:=1$. We then update $L\leftarrow \mu\cdot L$ for $\mu > 1$ in order to guarantee that $F(V_{L}(\xb^k)) \leq \mathcal{Q}_{L}(V_{L}(\xb^k); \xb^k)$. 
In our experiments, we use $\mu:=2$.
\end{itemize}
The results of our first test are presented in Table \ref{table:results_fixed}. The three columns for each strategy present the number of iterations, and the maximum violation of the quadratic (resp. trigonometric) constraint for each strategy on the three problem instances.
In terms of number of iterations, we can observe that fixing $L$ results in a poor performance and tends to increase the number of iterations as well as the final violation of the constraints. Given this poor performance, we do not consider the \textit{Fixed} strategy for the remainder of the numerical experiments. Also, using $\beta=|\mathcal{L}|$ produces satisfactory results, and increasing the value of $\beta$ supports convergence, therefore we choose this order of magnitude for the value of $\beta$.

\begin{table}[hpt!]
\caption{Performance results of the Gauss-Newton algorithm with different strategies for $\beta$ and $L$.}
\label{table:results_fixed}
\vspace{-1ex}
\begin{center}
\begin{tabular}{|l|c|c|c|c|c|c|c|c|c|c|}
\hline
&  & \multicolumn{3}{|c|}{Fixed} & \multicolumn{3}{|c|}{Bisection} & \multicolumn{3}{|c|}{Geometric-2}\\
\hline
Test Case & $\beta$ & \# It & MVQ & MVT & \# It & MVQ & MVT & \# It & MVQ & MVT\\
\hline
\texttt{1354pegase} & $|\mathcal{L}|/100$ & 100 & $6e^{-3}$ & $4e^{-6}$ & 100 & $6e^{-3}$ & $4e^{-6}$ & 100 & $7e^{-3}$ & $4e^{-6}$\\
& $|\mathcal{L}|/10$ & 91 & $2e^{-6}$ & $6e^{-9}$ & 3 & $5e^{-6}$ & $2e^{-7}$ & 3 & $5e^{-6}$ & $4e^{-6}$\\
& $|\mathcal{L}|$ & 78 & $2e^{-6}$ & $1e^{-8}$ & 3 & $8e^{-9}$ & $4e^{-9}$ & 3 & $6e^{-6}$ & $5e^{-6}$\\
\hline
\texttt{1888rte} & $|\mathcal{L}|/100$& 100 & $1e^{-2}$ & $8e^{-8}$ & 21 & $1e^{-2}$ & $9e^{-8}$ & 100 & $7e^{-3}$ & $8e^{-7}$\\
& $|\mathcal{L}|/10$& 100 & $8e^{-3}$ & $5e^{-8}$ & 11 & $7e^{-3}$ & $7e^{-8}$ & 100 & $5e^{-3}$ & $3e^{-8}$\\
& $|\mathcal{L}|$ & 100 & $8e^{-3}$ & $5e^{-8}$ & 15 & $5e^{-3}$ & $2e^{-8}$ & 100 & $3e^{-3}$ & $8e^{-8}$\\
\hline
\texttt{1951rte}& $|\mathcal{L}|/100$ & 86 & $4e^{-6}$ & $4e^{-9}$ & 10 & $2e^{-3}$&  $5e^{-8}$& 39 & $3e^{-4}$ & $4e^{-8}$\\
& $|\mathcal{L}|/10$ & 66 & $6e^{-7}$ & $3e^{-8}$ & 4 & $2e^{-5}$ & $6e^{-8}$ & 3 & $8e^{-6}$ & $6e^{-6}$\\
& $|\mathcal{L}|$ & 61 & $1e^{-6}$ & $1e^{-7}$ & 4 & $5e^{-7}$ & $1e^{-7}$ & 3 & $3e^{-6}$ & $2e^{-6}$\\
\hline
\end{tabular}
\end{center}
\vspace{-2ex}
\end{table}

\textit{Bisection} and \textit{Geometric-2} exhibit a similar behavior: when the algorithm converges to a feasible point, both choices achieve converge in tens of iterations, depending on the test case and the choice of $\beta$. Nevertheless, when failing, the maximum violation of the quadratic constraint (MVQ) never reaches the desired tolerance of $1e^{-5}$. 
This behavior might suggest that we do not penalize sufficiently the quadratic constraint. One should notice that the quadratic and trigonometric constraints are linked: once the angles are fixed, $\boldsymbol{c}$ and $\boldsymbol{s}$, which are the variables that appear in the quadratic constraints, struggle to move from their current value in order to satisfy the quadratic constraints. 
This motivates us to consider two different values for $\beta$ and $L$: $\beta_q$ for the quadratic constraints and $L_{cs}$ for the associated variables $\boldsymbol{c}$ and $\boldsymbol{s}$; $\beta_t$ for the trigonometric constraints and $L_\theta$ for the angle variables 
$\boldsymbol{\theta}$.



\subsection{Adaptation and enhancement of Algorithm \ref{alg:A1} for AC-OPF}
\paragraph{Adapting Algorithm \ref{alg:A1} with individual choices of parameters: $\beta_q$, $\beta_t$ and $L_{cs}$, $L_\theta$.} Based on our observations, we choose different values for $\beta_q$ and $\beta_t$.
Since we empirically observe that the quadratic constraints are harder to satisfy than the trigonometric constraints, we consider two alternatives: $\beta_q=2\beta_t$ and $\beta_q=5\beta_t$. Furthermore, we set $L_{cs,\min}=\beta_q/ \beta_t$ and $L_{\theta,\min}=1$.
Note that the choice of different values for these parameters does not affect the theoretical guarantees of our algorithm, as long as they satisfy our given conditions.

Also, since we have different values $L_{cs}$ and $L_\theta$, we must adapt the condition under which $L_{cs}$ and $L_\theta$ are updated. From the theory, $L$ is updated (through the \textit{Bisection} or \textit{Geometric} strategy) if the following condition is not met:
\begin{align*}
&&&F(\boldsymbol{x}^{k+1}) \leq \mathcal{Q}_{L}(\boldsymbol{x}^{k+1}; \boldsymbol{x}^k)\\
&\Leftrightarrow 
&&\beta_q\sum_{(i,j) \in \mathcal{L}} |\Psi_q^{ij}(\boldsymbol{c}^{k+1},\boldsymbol{s}^{k+1})|
 + \beta_t\sum_{(i,j) \in \mathcal{L}} |\Psi_t^{ij}(\boldsymbol{c}^{k+1},\boldsymbol{s}^{k+1},\boldsymbol{\theta}^{k+1})|\\
&&&  \leq \beta_{q}\phi_{q}(\boldsymbol{c}^k,\boldsymbol{s}^k,\boldsymbol{dc}^*,\boldsymbol{ds}^*) + \beta_{t}\phi_{t}(\boldsymbol{c}^k,\boldsymbol{s}^k,\boldsymbol{\theta}^k,\boldsymbol{dc}^*,\boldsymbol{ds}^*,\boldsymbol{d\theta}^*)
 + \frac{L_{cs}}{2}\Vert (\boldsymbol{dc}^*,\boldsymbol{ds}^*)\Vert^2 + \frac{L_{\theta}}{2}\Vert(\boldsymbol{d\theta}^*)\Vert^2
\end{align*}
where $\boldsymbol{x}^{k+1}=\boldsymbol{x}^{k}+\boldsymbol{d}^*$.

We adapt this condition to the specific type of constraint. Concretely:
\begin{itemize}[leftmargin=*]
\item If \begin{equation}
\begin{aligned}
& \beta_t\sum\limits_{(i,j) \in \mathcal{L}} |\Psi_t^{ij}(\boldsymbol{c}^{k+1},\boldsymbol{s}^{k+1},\boldsymbol{\theta}^{k+1})|\\[-3mm] 
&\quad \leq \beta_{t}\Phi_{t}(\boldsymbol{c}^k,\boldsymbol{s}^k,\boldsymbol{\theta}^k,\boldsymbol{dc}^*,\boldsymbol{ds}^*,\boldsymbol{d\theta}^*)
+ \frac{L_{cs}}{2}||(\boldsymbol{dc}^*,\boldsymbol{ds}^*)^\top||^2 + \frac{L_{\theta}}{2}||(\boldsymbol{d\theta}^*)||^2 
\end{aligned}
\label{eq:conditionTan} 
\end{equation}
then update $L_{cs}$ and $L_\theta$.
\item If \eqref{eq:conditionTan} does not hold and \begin{equation}
\begin{aligned}&\beta_q\sum\limits_{(i,j) \in \mathcal{L}} |\Psi_q^{ij}(\boldsymbol{c}^{k+1},\boldsymbol{s}^{k+1})| 
\leq \beta_{q}\Phi_{q}(\boldsymbol{c}^k,\boldsymbol{s}^k,\boldsymbol{dc}^*,\boldsymbol{ds}^*)+ \frac{L_{cs}}{2}||(\boldsymbol{dc},\boldsymbol{ds})^\top||^2\end{aligned}
\label{eq:conditionQuad}
\end{equation}
then only update $L_{cs}$.
\end{itemize}

\paragraph{Refining the $L$ updates.}
We emphasize that, in Algorithm~\ref{alg:A1}, each time that the values of $L_{cs}$ and/or $L_{\theta}$ are updated, the subproblem $\mathcal{P}_{sub}^k$ is resolved. 
Therefore, an effective strategy for updating these parameters can lead to significant improvements in computational time.
We mitigate this heavy computational requirement by introducing resolution techniques that are guided by both our theoretical results and empirical observations. Concretely, we propose the following two improvements to the practical implementation of the algorithm, in order to limit the number of computationally expensive $L$ updates:
\begin{enumerate}
	\item Keeping the value of $L$ from one iteration to another is a better strategy than having $L:=L_{\min}$ at the beginning of each iteration. The natural justification for this is that we expect steps to decrease along the iterations. 	\item Checking conditions \eqref{eq:conditionTan} and \eqref{eq:conditionQuad} can require a large number of $L$ updates. Instead, we propose a verification of whether the violation of the constraints is decreasing before checking  \eqref{eq:conditionTan} and \eqref{eq:conditionQuad}, which is a less stringent requirement that still yields satisfactory results in terms of constraint violations. Concretely, at iteration $k$, we compute the $\ell_1$ and $\ell_\infty$ norms of $\Psi_q(\boldsymbol{c}^k,\boldsymbol{s}^k)$ and $\Psi_t(\boldsymbol{c}^k,\boldsymbol{s}^k,\boldsymbol{\theta}^k)$. If these quantities decrease from $k-1$ to $k$, we move to iteration $k+1$.
\end{enumerate}

By applying these two approaches, we obtain the results that are presented in Table \ref{table:results_diffbetadiffL_cutL}. In Table \ref{table:results_diffbetadiffL_cutL}, \# It provides the number of GN iterations,  \# L-It provides the number of additional subproblems solved because of an $L$-update (then \# It+\# L-It gives the total number of subproblems solved) and Obj records the objective value returned. From Table \ref{table:results_diffbetadiffL_cutL}, we observe that this new strategy leads to convergence for all three test cases. Even if Bisection seems to require less iterations, it also provides higher objective values than Geometric-2. Also, we do not observe notable differences between applying a factor of 2 or 5 on $\beta$, although a factor of $5$ decreases slightly the number of iterations. In our implementation, we employ the following settings: Geometric-2, $\beta_t=|\mathcal{L}|$ and $\beta_q=5\beta_t$.

\begin{table}[hpt!]
\caption{Performance behavior of the Gauss-Newton algorithm with different values and strategies for $\beta_q$, $\beta_t$, $L_{cs}$, $L_\theta$.}
\label{table:results_diffbetadiffL_cutL}
\begin{center}
\begin{tabular}{|l|cc|c|c|c|c|c|c|c|c|c|}
\hline
& & & \multicolumn{3}{|c|}{Bisection} & \multicolumn{3}{|c|}{Geometric-2}\\
\hline
Test Case & $\beta_t$  & $\beta_q$ & \# It & \# L-It & Obj & \# It & \# L-It & Obj\\
\hline
\texttt{1354pegase}
& $|\mathcal{L}|$ & $\times 2$ & 3 & 1 & $7.408e4$ & 14 & 6 &  $7.407e4$\\
& $|\mathcal{L}|$ & $\times 5$  & 4 & 1 & $7.408e4$ & 13 & 4 &  $7.407e4$\\
\hline
\texttt{1888rte} 
& $|\mathcal{L}|$ & $\times 2$&  8 & 1 & $5.982e4$ & 17 & 1 & $5.981e4$\\
& $|\mathcal{L}|$ & $\times 5$  & 11 & 1 & $5.982e4$ & 14 & 3 & $5.981e4$ \\
\hline
\texttt{1951rte}
& $|\mathcal{L}|$ & $\times 2$ & 4 & 1 & $8.174e4$ & 6 & 1 & $8.174e4$\\
& $|\mathcal{L}|$ & $\times 5$  & 4 & 0 & $8.174e4$ & 4 & 0 & $8.174e4$\\
\hline
\end{tabular}
\end{center}
\vspace{-2ex}
\end{table}

\end{document}